\numberwithin{equation}{section}
\newcommand{\abs}[1]{\left\vert#1\right\vert}
\newcommand{\set}[1]{\left\{#1\right\}}
\newcommand{\eps}{\varepsilon}
\newcommand{\re}{\mathrm{Re}}
\renewcommand{\L}{\mathcal{L}}
\newcommand{\C}{\mathbb C}
\DeclareMathOperator{\Ran}{Range}
\newcommand{\bd}{\textrm{b}}
\newcommand{\p}{\partial}
\newcommand{\z}{\bar z}
\newcommand{\dbar}{\bar\partial}
\newcommand{\dbarb}{\bar\partial_b}
\DeclareMathOperator{\Tr}{Tr}
\newcommand{\la}{\langle}
\newcommand{\ra}{\rangle}
\newcommand{\LL}{\bar L}
\DeclareMathOperator{\dist}{dist}
\DeclareMathOperator{\Span}{span}
\DeclareMathOperator{\Hess}{Hess}
\DeclareMathOperator{\rank}{rank}
\DeclareMathOperator{\opL}{\mathcal{L}}
\newtheorem{thm}{Theorem}[section]
\newtheorem{prop}[thm]{Proposition}
\newtheorem{lem}[thm]{Lemma}
\newtheorem{cor}[thm]{Corollary}
\newtheorem*{theorem*}{Theorem}
\theoremstyle{definition}
\newtheorem{defn}[thm]{Definition}
\newtheorem{ex}[thm]{Example}
\theoremstyle{remark}
\newtheorem{rem}[thm]{Remark}
\newcommand{\Om}{\Omega}
\begin{document}

\title{Boundary invariants and the closed range property for $\dbar$}%

\author{Phillip S. Harrington and Andrew Raich}%
\address{SCEN 309, 1 University of Arkansas, Fayetteville, AR 72701}%
\email{psharrin@uark.edu, araich@uark.edu}%

\thanks{The second author is partially supported by NSF grant DMS-1405100.}%
\subjclass[2010]{Primary 32T27, Secondary 32F17, 32W05, 35N15}
\keywords{weak $Z(q)$, $\dbar$-Neumann operator, closed range, biholomorphic invariants, CR invariants, indefinite matrices}

\begin{abstract}
This paper provides a connection between two distinct branches of research in CR geometry -- namely, analytic and geometric conditions that suffice
to establish the closed range of the
Cauchy-Riemann operator and CR invariants on CR manifolds. Specifically, we work on not necessarily pseudoconvex domains $\Om\subset\C^n$ and
define third and fourth order CR invariants on $\bd\Om$ and show that these invariants provide enough information to establish 
closed range for the $\dbar$-Laplacian in $L^2_{(0,q)}(\Om)$ for a given, fixed $q$. The closed range estimates 
follow from our previously defined weak $Z(q)$ condition. We also develop powerful linear
algebra machinery to translate
the information from the invariants into information about the Levi form and its eigenvalues.
We conclude with several examples that demonstrate the usefulness and ease of use of the new conditions.
\end{abstract}

\maketitle

%
%
\section{Introduction}
The main goal of this paper is to demonstrate that geometric information captured by certain invariant CR tensors
provides sufficient information to establish the closed range property for
$\dbar$ on a domain $\Omega\subset\C^n$.  A secondary goal of the paper is to provide
general construction methods for establishing when a domain (or its boundary) satisfies weak $Z(q)$, which is a sufficient condition for closed range introduced by the authors in \cite{HaRa15}.

The theory for the closed range property for $\dbar$ on pseudoconvex domains
is classical and well-understood \cite{Hor65} (although see \cite{HeMc16} for additional discussion of the unbounded case), so we focus on non-pseudoconvex domains. The analog of strict pseudoconvexity at the level
of $(0,q)$-forms is $Z(q)$, a geometric condition that says (for domains $\Om\subset\C^n$) that at every point the Levi form on $\bd\Om$ has at least
$(n-q)$ positive or at least $(q+1)$ negative eigenvalues \cite{Hor65}. H\"ormander also proved that
$Z(q)$ is equivalent to the $\dbar$-Neumann operator gaining $1/2$ a derivative in $L^2_{0,q}(\Om)$ Sobolev spaces. While there is no known necessary
and sufficient condition for closed range of $\dbar$ on $(0,q)$-forms for fixed $q$, the most general
sufficient condition for closed range of $\dbar$ on $(0,q)$-forms is the weak $Z(q)$ condition that we introduced in \cite{HaRa15}.

Our approach in this paper is novel relative to earlier efforts to establish closed range for $\dbar$ on $(0,q)$-forms
\cite{HaRa11,HaRa15, Ho91,Sha85a}. In the earlier papers, the authors manipulated the basic identity/basic estimate to find a version that was
well-adapted to the geometry of their particular hypotheses. Here, we take an alternative approach -- we start with geometric information about
$\bd\Om$ encoded by the Levi form and its derivatives (i.e., the tensors mentioned above) and seek to convert it to information about weak $Z(q)$.

To find an appropriate hypothesis for $\Om$, we start by further exploring the $Z(q)$ condition.
That the boundary of every bounded domain contains
a point of convexity means that for bounded $Z(q)$ domains with connected boundaries the Levi form must have at least $n-q$ positive eigenvalues at each boundary point. Generically, if
the Levi form for a bounded domain has a degeneracy from $Z(q)$, it would be that the Levi form has least $n-q-1$ positive eigenvalues,
so this is our starting positivity hypothesis. Additionally, the Levi form of a bounded domain that satisfies $Z(q)$ can have at most $q-1$ negative
eigenvalues at each boundary point, so our negativity hypothesis is that the Levi form has at most $q-1$ negative eigenvalues.


This work was inspired by the recent preprint of Zaitsev \cite{Zai19}, who seeks to build a family of invariants that will make Catlin's seminal works
on finite type and subelliptic estimates for $\dbar $ on pseudoconvex domains in $\C^n$ more accessible to the general mathematical community.
Zaitsev therefore concentrates on pseudoconvex domains and consequently his theory is not applicable to the non-pseudoconvex case. Roughly speaking,
our CR invariants will measure the degree to which the Levi form vanishes in particular directions at points in the null space of the Levi form $\opL$. We define
two invariant tensors $\tau^3$ and $\tau^4$. Essentially, $\tau^3$ measures first order vanishing in directions in the kernel of the Levi-form. The Levi form is
already a $(1,1)$-form whose coefficients are comprised of second order derivatives of the defining function of $\Omega$.
The fourth order invariant $\tau^4$ is defined on the
null space of $\tau^3$ and generated by differentiating the Levi form a second time. Generally speaking, one can build a sequence of invariants
so that the $(k+1)$st order invariant is defined by differentiating the Levi form (or the $k$th order invariant) by directions in the null space of the $k$th order invariant
(see, e.g., \cite{Ebe98}). Thus, the boundary invariants $\opL, \tau^3$, and $\tau^4$ give information about the curvature of $\bd\Om$.

It is well-known that solvability of the $\dbar$-equation with estimates is deeply intertwined with curvature conditions of $\bd\Om$. Pseudoconvexity is
simply nonnegativity of the Levi form. When pseudoconvexity does not hold, information about the Levi form and directions in which its
eigenvalues are positive, negative, or
vanishing is critical in establishing a good basic estimate. The $(1,1)$-vector $\Upsilon$ that we define in \cite{HaRa15} in Stein manifolds
and more simply for domains in $\C^n$ in \cite{HaRa19,HaRa18} exactly captures this type of information and was specifically designed to provide
an avenue for proving closed range estimates. The problem with $\Upsilon$,
however, is that there are not general techniques to build it, except when the hypotheses
are very strong, like $Z(q)$ or uniform $Z(q)$ \cite{HaRa11,HaRa17z}.

Despite the fact that invariants and closed range estimates both require information on the Levi form and the curvature of $\bd\Om$, the development of these two
theories happened in parallel, with little cross pollination when $n \geq 3$. On pseudoconvex domains in $\C^2$,
the situation is different as the control metric on finite type domains is well known to govern the behavior of all operators related to the $\dbar$ and
$\dbarb$. The key elements in computing the control metric are commutators of tangential vectors fields, and these turn out to give exactly the
derivatives of the Levi form. See, for example, \cite{NaStWa85,NaRoStWa89,McN89}.

Our considerations are substantively different than the $\C^2$ case because the interest there was in pointwise and subelliptic estimates, and the degree
to which the Levi form vanished at weakly pseudoconvex points directly informed the estimate. We are interested in proving closed range, and
the lack of pseudoconvexity plays a fundamental role in terms of the estimates that one can prove. In \cite{Zai19}, pseudoconvexity is a critical hypothesis,
so our development of a fourth order invariant $\tau^4$ is independent of his, and indeed, is neither more nor less general.
Although we use the same notation, this is due to the similarity
of the underlying motivation and not the actual details of the construction. The two invariants are quite different.

Our original result in \cite{HaRa15} requires the existence of a matrix of $C^2$ functions $\Upsilon$ satisfying a family of properties.
The main result in these notes, Theorem \ref{thm:main_theorem}, could be expressed by saying that we need a (constant) matrix $N$ satisfying a family of properties.
This is already more ``checkable," since we are searching in a finite dimensional space of objects at the expense of a little bit of generality.
In order to further understand this constant matrix $N$, we develop some linear algebra that allows us to formulate several theorems that
connect the invariant tensors with weak $Z(q)$.
Theorem \ref{thm:isolated degeneracy} is the most ``checkable" of the conditions, as demonstrated by Example \ref{ex:simple example}.
Theorem \ref{thm:isolated degeneracy_improved} invokes additional machinery to derive a more difficult condition to check,
but Example \ref{ex:complicated example} shows that the generality admits further examples.  We are not aware of another paper that merges CR geometry with sufficient conditions for closed range in this way.

The outline of the paper is as follows. Section \ref{sec:results} contains the main results and the definitions required to properly formulate those results.
In Section \ref{sec:main_proofs}, we prove the main result.
There is a lot of geometry here, mostly to make sure that $\tau^4_p$ is correctly defined.  Section \ref{sec:indefinite_subspaces} sets up the multilinear algebraic machinery (including an exhaustive list of the examples with $2\times 2$ matrices).  With this new material to characterize the possible range of $N$, we are able to prove Theorems \ref{thm:isolated degeneracy} and \ref{thm:isolated degeneracy_improved}.

We conclude the paper with Section \ref{sec:example} in which we provide two examples -- Example \ref{ex:simple example} which demonstrates the
usefulness of Theorem \ref{thm:isolated degeneracy} and Example \ref{ex:complicated example} which shows the power of the more general formulation.
In both cases, we explicitly compute $\Upsilon$ using the methods of proof of the paper, so reading these examples can shine a light on the thought processes
behind the analysis.

%
%
\section{The Main Results}\label{sec:results}

\subsection{Definitions}\label{subsec:definitions}

\subsubsection{CR Geometry}
For a function $\alpha$, we denote $\alpha_k = \frac{\p\alpha}{\p z_k}$ and $\alpha_{\bar j} = \frac{\p\alpha}{\p\z_j}$.

Let $\Omega\subset\mathbb{C}^n$ be a bounded domain with $C^4$ boundary, and choose a $C^4$ defining function $\rho$ for $\Omega$.
We normalize our metric so that $|dz_j|=1$ for all $1\leq j\leq n$.  Let $T(\bd\Om)$ denote the space of tangent vectors on $\bd\Om$,
with $\mathbb{C}T(\bd\Om)$ denoting its complexification.  For $X\in T(\mathbb{C}^n)$, we denote the Levi-Civita connection by $\nabla_X$.
When needed, we will extend this to $\mathbb{C}T(\mathbb{C}^n)$ by requiring it to be $\mathbb{C}$-linear.
Since we are using the Euclidean metric on $\mathbb{C}^n$, in orthonormal coordinates we have
\[
  \nabla_X\left(\sum_{j=1}^n\left( a^j\frac{\partial}{\partial z_j}+b^j\frac{\partial}{\partial\bar z_j}\right)\right)
  =\sum_{j=1}^n\left( (X a^j)\frac{\partial}{\partial z_j}+(X b^j)\frac{\partial}{\partial\bar z_j}\right).
\]
For $X\in T(\bd\Om)$, we will use $\nabla^b_X$ to denote the Levi-Civita connection induced by the restriction to $\bd\Om$.  Let $\nu=2|d\rho|^{-1}\re\left(\sum_{j=1}^n\frac{\partial\rho}{\partial\bar z_j}\frac{\partial}{\partial z_j}\right)$ denote the unit outward normal to $\bd\Om$.
For $X,Y\in T(\bd\Om)$, we may check that $\nabla^b_X Y$ is the tangential part of $\nabla_X Y$. In other words,
\[
  \nabla^b_X Y=\nabla_X Y-\left<\nabla_X Y,\nu\right>\nu.
\]
Since $\left<Y,\nu\right>=0$ we have $\left<\nabla_X Y,\nu\right>=-\left<Y,\nabla_X \nu\right>$, and we compute
\begin{equation}
\label{eq:connection_normal}
  \left<Y,\nabla_X \nu\right>=\left<Y,2|d\rho|^{-1}\re\left(\sum_{j=1}^n\left(X\frac{\partial\rho}{\partial\bar z_j}\right)\frac{\partial}{\partial z_j}\right)\right>.
\end{equation}
For $X,Y\in T(\mathbb{C}^n)$, we define the hessian to be the operator
\[
  \Hess(X,Y)=XY-\nabla_X Y,
\]
and note that this is symmetric for the Levi-Civita connection (since $[X,Y] = \nabla_X Y - \nabla_Y X$).
With this notation, we can simplify \eqref{eq:connection_normal} and write $\left<Y,\nabla_X \nu\right>=|d\rho|^{-1}\Hess(X,Y)\rho$.  Hence, we have
\begin{equation}
\label{eq:boundary_connection}
  \nabla^b_X Y=\nabla_X Y+|d\rho|^{-1}(\Hess(X,Y)\rho)\nu.
\end{equation}
For $X,Y\in T(\bd\Om)$, we also have the boundary hessian,
\[
  \Hess^b(X,Y)=XY-\nabla^b_X Y,
\]
but using \eqref{eq:boundary_connection} we may rewrite this as
\begin{equation}
\label{eq:boundary_hessian}
  \Hess^b(X,Y)=\Hess(X,Y)-|d\rho|^{-1}(\Hess(X,Y)\rho)\nu.
\end{equation}
This gives us the expected result $\Hess^b(X,Y)\rho=0$.  For $p\in\bd\Om$ and $X\in T_p(\bd\Om)$, let $\gamma_{p,X}(t)$ denote the unique solution to the geodesic equation $\gamma_{p,X}(0)=p$, $\gamma'_{p,X}(0)=X$, and $\gamma''_{p,X}=-|d\rho|^{-1}(\Hess(\gamma'_{p,X}(t),\gamma'_{p,X}(t))\rho)\nu|_{\gamma_{p,X}(t)}$.  Then for any $C^2$ function $f$ defined on $\gamma_{p,X}$, we have
\begin{multline*}
  \frac{d^2}{dt^2}f(\gamma_{p,X}(t))=\left(\Hess(\gamma'_{p,X}(t),\gamma'_{p,X}(t))f-|d\rho|^{-1}(\Hess(\gamma'_{p,X}(t),\gamma'_{p,X}(t))\rho)\nu f\right)|_{\gamma_{p,X}(t)}\\
  =\left(\Hess^b(\gamma'_{p,X}(t),\gamma'_{p,X}(t))f\right)|_{\gamma_{p,X}(t)},
\end{multline*}
so $\Hess^b$ is the natural tool for computing convexity of quantities defined on the boundary of $\Omega$.

Let $T^{1,0}(\bd\Omega)$ be the space of $C^{m-1}$ sections of $T^{1,0}_z(\bd\Omega)$ and $T^{0,1}(\bd\Omega) = \overline{T^{1,0}(\bd\Omega)}$.
The induced CR-structure on $\bd\Omega$  at $z\in\bd\Omega$ is
\[
T^{1,0}_z(\bd\Omega)  = \{ L\in T^{1,0}(\C) : \p\rho(L)=0 \}.
\]
We denote the exterior algebra generated by these spaces by $T^{p,q}(\bd\Omega)$.
If we normalize $\rho$ so that $|d\rho|=1$ on $\bd\Omega$, then \emph{the Levi form} $\L$ is the real element of $\Lambda^{1,1}(\bd\Omega)$ defined by
\[
\opL(L,\LL) = i\p\dbar\rho(-iL\wedge\LL)
\]
for any $L\in T^{1,0}(\bd\Omega)$.
\begin{defn}\label{defn:Levi form kernel and tau3}
For $p\in\bd\Om$, let $K^{1,0}_p$ denote the \emph{kernel of the Levi form}, i.e.,
$L\in T^{1,0}_p(\bd\Om)$ is in $K^{1,0}_p$ if and only if $\opL(L, \bar L')=0$ at $p$ for all $L'\in T^{1,0}_p(\bd\Om)$.

If $L,L'\in K^{1,0}_p$ and
$X\in T_p(\bd\Om)$, we will also make use of \emph{Zaitsev's cubic invariant} \cite{Zai19} (some form of this invariant goes back at least as far as Ebenfelt's work in \cite{Ebe98})
\[
  \tau^3_p(X,L,\bar L')=X(\mathcal{L}(L,\bar L')).
\]
Note that for any fixed $X\in T_p(\bd\Om)$, $\tau^3_p(X,\cdot,\cdot)$ defines a hermitian matrix on $K^{1,0}_p$.
\end{defn}
\begin{rem}
The vector field $X$ in the definition of $\tau^3$ is a \emph{real} vector field as it is an element of $T_p(\bd\Om)$ and not
$\C T_p(\bd\Om)$.
\end{rem}

\begin{rem}
Zaitsev \cite{Zai19} actually defines $\tau^3_p(Z,L,\bar L')$ for $Z\in\mathbb{C}T_p(\bd\Om)$,
but this generality comes at the expense of the hermitian matrices that we will need,
and is easily recovered from our definition by complexifying.
\end{rem}

\begin{rem} \label{rem:notation}
Throughout the paper, we denote by $\mu_1,\dots,\mu_{n-1}$ the eigenvalues of the Levi form in increasing order. Given a point $p\in\bd\Om$, we let
$n_p^-$  be the number of negative eigenvalues of the Levi form at $p$ and $\{L_1^p,\dots, L_{n-1}^p\}$ a (local) orthonormal basis of $T_p^{1,0}(\bd\Om)$. Since the Levi form
is a Hermitian matrix, we can find an orthonormal basis of eigenvectors that we denote by $\{L_1^\mu,\dots,L_{n-1}^\mu\}$ where $L_j^\mu$ is an unit eigenvector with eigenvalue
$\mu_j$. This means that if the dimension of the null space of the Levi form at $p$ is $r$, then the null space at $p$ has orthonormal basis $\{L_{n_p^-+1}^\mu,\dots,L_{n_p^-+r}^\mu\}$.
Additionally, we denote the unit complex normal by $L_n$. This means $\{L_1^\mu,\dots, L_{n-1}^\mu, L_n\}$ forms an orthonormal
basis of $T_p^{1,0}$, and we choose $L_n$ so that the real, outward normal $\nu$ satisfies $\nu=\frac{1}{\sqrt{2}}(L_n+\bar L_n)$.
\end{rem}

Higher order invariants require greater care.  In the first place, the kernel of $\tau^3_p$ depends on the choice of
$X$.  For $X\in T_p(\bd\Om)$, we define $K^{1,0}_{p,X}$ to be the set of $L\in K^{1,0}_p$ such that $\tau^3_p(X,L,\bar L')=0$
for all $L'\in K^{1,0}_p$.
To define an invariant $\tau^4_p$ on $K^{1,0}_{p,X}$, we will need to consider our choice of coordinates.

\begin{defn}
\label{defn:tilde_k}
  Let $U_p$ be a neighborhood of $p$ on which $\mu_k\neq \mu_j$ whenever $\mu_k(p)\neq 0$ and $\mu_j(p)=0$.  Let $\tilde K^{1,0}_p(\bd\Om)\subset T^{1,0}(\bd\Om)$ denote the space of vector fields with $C^2$ coefficients on $U_p$ in the span of $\{L_j^\mu:\mu_j(p)=0\}$.
\end{defn}
Eigenvectors may not depend smoothly on $p$, so for a given point $p$, we denote by $\{L_1,\dots, L_n\}$ an orthonormal basis of $T_p^{1,0}$
so that $\{L_1,\dots,L_{n-1}\}$ is an orthonormal basis of $T^{1,0}(\bd\Om)$ with $C^2$ coefficients and
that $L_j(p)=L_j^\mu(p)$ for all $1\leq j\leq n-1$ and $L_j\in\tilde K_p^{1,0}(\bd\Om)$
whenever $L_j^\mu(p)\in K_p^{1,0}$ (see Remark \ref{rem:notation} for the notation).  Such a basis exists because we have previously shown that the orthogonal projection of $T^{1,0}(\bd\Om)$ onto $\tilde K^{1,0}_p(\bd\Om)$ is $C^2$
(Lemma A.1 in \cite{HaRa17z}).

With this structure in place, we are ready to define our fourth-order invariant.  For $X,Y\in T_p(\bd\Om)$ and $L,L'\in \tilde K^{1,0}_p(\bd\Om)$, we define
\[
  \tau^4_p(X,Y,L,\bar L')=\Hess^b(X,Y)\opL(L,\bar L').
\]
We will show in Section \ref{sec:CR_geometry} that this definition depends only on $L|_p$ and $L'|_p$ when $L,L'\in K_{p,X}^{1,0}(b\Omega)\cap K_{p,Y}^{1,0}(b\Omega)$.  This is not obvious since $\tilde K^{1,0}_p(b\Omega)$ depends on local information.

\begin{defn}\label{defn:weak Z(q)}
Let $\Omega\subset \C^n$ be a domain with a $C^m$ defining function $\rho$, $m\geq 2$. We say $\bd\Omega$ (or $\Omega$) satisfies
\emph{Z(q) weakly}
if there exists a hermitian matrix $\Upsilon=(\Upsilon^{\bar k j})$ of functions on $b\Omega$ that are uniformly bounded in $
C^{m-1}$ such that $\sum_{j=1}^{n}\Upsilon^{\bar k j}\rho_j=0$ on $\bd\Omega$ and:
\begin{enumerate}\renewcommand{\labelenumi}{(\roman{enumi})}
 \item The Eigenvalue Condition: All eigenvalues of $\Upsilon$ lie in the interval $[0,1]$.

 \item The Boundary Condition:
 $\mu_1+\cdots+\mu_q-\sum_{j,k=1}^n\Upsilon^{\bar k j}\rho_{j\bar k}\geq 0$ where  $\mu_1,\ldots,\mu_{n-1}$ are the eigenvalues of the Levi form $\L$ in increasing order.

 \item The Trace Condition: $ \inf_{z\in\bd\Omega} \{ |q-\Tr(\Upsilon)|\} >0$.

\end{enumerate}
\end{defn}
The main result of \cite{HaRa15} is that weak $Z(q)$ suffices to establish a closed range estimate for $\dbar$ at levels $q$ and
$q-1$ (and hence solvability with estimates of the equation $\dbar u=f$ in $L^2$ at these form levels). In fact, weak $Z(q)$ suffices to solve the $\dbar$-Neumann problem
on unbounded domains with estimates in specially constructed $L^2$-Sobolev spaces \cite{HaRa14} that lead to solvability in $C^\infty$ \cite{HaRa18, HaRa19}.

With these definitions in place, we are able to state our main result.
\begin{thm}
\label{thm:main_theorem}
  Let $\Omega\subset\mathbb{C}^n$ be a bounded domain with a $C^4$ boundary, and suppose that for some $1\leq q\leq n-1$ the Levi form of $\Omega$ has at most $q-1$ negative eigenvalues and at least $n-q-1$ positive eigenvalues.
 Suppose that for each $p\in\bd\Om$ at which the Levi form has exactly $n-q-1$ positive eigenvalues, there exists a $(q-n_p^-)\times(q-n_p^-)$ positive semi-definite matrix
 $N_p$ such that for all $X\in T_p(b\Omega)$
  \begin{equation}
  \label{eq:tau_3_hypothesis}
    \sum_{j,k=n_p^-+1}^q N_p^{\bar k j}\tau^3_p(X,L_j^p,\bar L_k^p)=0,
  \end{equation}
  and if there exists $X\in T_p(\bd\Om)$ such that
  \begin{equation}
  \label{eq:kernel_hypothesis}
    \sum_{j=n_p^-+1}^q N_p^{\bar k j}L_j^p\in K_{p,X}^{1,0},
  \end{equation}
  for all $n_p^-+1\leq k\leq q$, then
  \begin{equation}
  \label{eq:tau_4_hypothesis}
    \sum_{j,k=n_p^-+1}^q N_p^{\bar k j}\tau^4_p(X,X,L_j^{p},\bar L_k^{p})>0.
  \end{equation}
  Then
  \begin{enumerate}
    \item The space of harmonic $(0,q)$-forms $\mathcal{H}^q(\Omega)$ is trivial.

    \item The $\dbar$-Laplacian $\Box^q$ has closed range in $L^2_{(0,q)}(\Omega)$.

    \item The $\dbar$-Neumann operator $N^q$ exists and is continuous in $L^2_{(0,q)}(\Omega)$.

    \item The operator $\dbar$ has closed range in $L^2_{(0,q)}(\Omega)$ and $L^2_{(0,q+1)}(\Omega)$.

    \item The operator $\dbar^*$ has closed range in $L^2_{(0,q)}(\Omega)$ and $L^2_{(0,q-1)}(\Omega)$.
  \end{enumerate}
\end{thm}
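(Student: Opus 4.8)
The plan is to verify that $\bd\Om$ satisfies weak $Z(q)$ in the sense of Definition~\ref{defn:weak Z(q)}; conclusions (1)--(5) are then exactly the content of the main theorem of \cite{HaRa15}, so the entire task is to construct a bounded Hermitian matrix $\Upsilon=(\Upsilon^{\bar k j})$ on $\bd\Om$ with $\sum_j\Upsilon^{\bar k j}\rho_j=0$ meeting the Eigenvalue, Boundary and Trace Conditions. I would assemble $\Upsilon$ from local pieces glued by a partition of unity: the tangency relation and the Boundary Condition $\mu_1+\cdots+\mu_q-\sum\Upsilon^{\bar k j}\rho_{j\bar k}\ge0$ are \emph{affine} in $\Upsilon$ (so they survive convex combinations when the cutoffs sum to $1$), the Eigenvalue Condition $0\le\Upsilon\le I$ is stable under convex combinations, and the Trace Condition survives as long as every local piece has $\Tr\Upsilon\le q-\delta$ for one fixed $\delta>0$. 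So it suffices to produce, near each $p_0\in\bd\Om$, a $C^2$ matrix $\Upsilon_{p_0}$ with these properties and trace uniformly $<q$. By the hypotheses and Remark~\ref{rem:notation}, each $p_0$ is either a $Z(q)$ point -- at least $n-q$ positive Levi eigenvalues, since at most $q-1$ are negative -- or a \emph{degenerate} point, where the Levi form has exactly $n-q-1$ positive eigenvalues, $n_{p_0}^-$ negative ones, and hence a kernel $K^{1,0}_{p_0}$ of dimension $q-n_{p_0}^-$, and where we are handed the matrix $N_{p_0}$.

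At a $Z(q)$ point $p_0$ one has $\mu_q>0$ nearby; fixing a threshold $c_0\in(0,\mu_q(p_0))$ that is not a Levi eigenvalue at $p_0$, I would take $\Upsilon_{p_0}$ to be the orthogonal projection onto the span of the Levi eigenvectors with eigenvalue $<c_0$ (a $C^2$ projection near $p_0$, since no eigenvalue crosses $c_0$). It is tangential, has spectrum in $\{0,1\}$, rank at most $q-1$ (because $\mu_q(p_0)>c_0$), and turns the Boundary expression into $\sum_{i\le q,\ \mu_i\ge c_0}\mu_i\ge\mu_q>0$; this is the familiar fact that $Z(q)$ implies weak $Z(q)$ (cf.\ \cite{HaRa11,HaRa15}). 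At a degenerate point $p_0$, Lemma~A.1 of \cite{HaRa17z} (see the discussion after Definition~\ref{defn:tilde_k}) provides, on a neighborhood $U_{p_0}$, $C^2$ orthogonal projections $\Pi^0$ onto the span of the eigenvectors vanishing at $p_0$ (i.e.\ onto $\tilde K^{1,0}_{p_0}(\bd\Om)$) and, after shrinking, $\Pi^-$ onto the span of the eigenvectors negative at $p_0$. Using the $C^2$ frame of Definition~\ref{defn:tilde_k} I would identify $N_{p_0}$ with a Hermitian endomorphism $\hat N$ of the range of $\Pi^0_{p_0}$, extend it to a $C^2$ family $\hat N_z$ of Hermitian endomorphisms of the range of $\Pi^0_z$ (say with constant coefficients in the frame), rescaled so that $0\le\hat N_z\le\Pi^0_z$, and set $\Upsilon_{p_0}:=\Pi^-+\Pi^0-\Pi^0\hat N\Pi^0$ on $U_{p_0}$. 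This is $C^2$, Hermitian, tangential, has spectrum in $[0,1]$ (it is the orthogonal sum $\Pi^-\oplus(\Pi^0-\hat N)$), and at $p_0$ has trace $n_{p_0}^-+(q-n_{p_0}^-)-\Tr\hat N=q-\Tr\hat N<q$ -- strictly, since \eqref{eq:tau_4_hypothesis} forces $N_{p_0}\ne0$ -- so the Trace piece holds near $p_0$.

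It remains to verify the Boundary Condition for $\Upsilon_{p_0}$ near a degenerate $p_0$. There is a spectral gap between $\mu_q$ (which tends to $0$) and $\mu_{q+1}$ (bounded below) near $p_0$, so the range of $\Pi^-_z\oplus\Pi^0_z$ is the span of the $q$ smallest Levi eigenvectors and $\mu_1+\cdots+\mu_q=\Tr\bigl((\Pi^-_z+\Pi^0_z)\opL_z\bigr)$; hence the Boundary expression equals the scalar $g(z):=\Tr\bigl(\Pi^0_z\hat N_z\Pi^0_z\,\opL_z\bigr)$. Since the range of $\Pi^0_{p_0}$ is $\ker\opL_{p_0}$ we have $g(p_0)=0$, so I would prove $g\ge0$ near $p_0$ by expanding $g$ along the boundary geodesics $\gamma_{p_0,X}$ and using the second-variation identity derived above. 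Because $\opL_{p_0}$ kills $K^{1,0}_{p_0}$, the connection corrections drop out of the relevant covariant derivatives: the first-order term of $g$ at $p_0$ reduces to a multiple of $\sum_{j,k}N_{p_0}^{\bar k j}\tau^3_{p_0}(X,L_j^{p_0},\bar L_k^{p_0})$, which vanishes by \eqref{eq:tau_3_hypothesis}, and the second-order term reduces to a multiple of $\sum_{j,k}N_{p_0}^{\bar k j}\tau^4_{p_0}(X,X,L_j^{p_0},\bar L_k^{p_0})$ plus nonnegative remainders. A direction $X$ makes those remainders degenerate exactly when $\sum_j N_{p_0}^{\bar k j}L_j^{p_0}\in K^{1,0}_{p_0,X}$ for every $k$, i.e.\ when \eqref{eq:kernel_hypothesis} holds -- which is precisely the regime in which $\tau^4_{p_0}$ is a well-defined invariant on the relevant vectors (Section~\ref{sec:CR_geometry}) and in which \eqref{eq:tau_4_hypothesis} yields the strict positivity that forces $g\ge0$. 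Gluing the local pieces as above then completes the proof.

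The main obstacle is this last second-order analysis at the degenerate points: one must show that the second variation of $g$ along $\gamma_{p_0,X}$ is genuinely bounded below by the $\tau^4$-term for every admissible $X$, carefully tracking the ``$\tau^3$-squared'' cross terms and the subtle point that the naive Hessian of $\opL$ computed in the local $\tilde K^{1,0}_{p_0}(\bd\Om)$-frame agrees with the invariant $\tau^4_{p_0}$ only on $K^{1,0}_{p_0,X}$ -- which is exactly what the CR-geometric bookkeeping of Section~\ref{sec:CR_geometry} (well-definedness of $\tau^4_{p_0}$) is for. The non-smoothness of the Levi eigenvectors and the absence of any assumed regularity of $p\mapsto N_p$ are a secondary nuisance, dealt with by working throughout with the $C^2$ spectral projections and frames of Definition~\ref{defn:tilde_k}.
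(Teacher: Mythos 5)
Your overall framework is close to the paper's: you reduce to verifying weak $Z(q)$, you recognize that the boundary quantity $\mu_1+\cdots+\mu_q-\Tr(\Upsilon\opL)$ localizes near the degenerate points to $g=\Tr(\Pi^0\hat N\Pi^0\opL)$, and you correctly observe that $g$ vanishes to second order at a degenerate $p_0$ thanks to~\eqref{eq:tau_3_hypothesis}, so the issue is the sign of $\Hess^b(X,X)g|_{p_0}$. The paper also phrases things this way.

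However, there is a genuine gap in the second-order step, and it is exactly the place you flag as ``the main obstacle'' but then wave away as bookkeeping. If you extend $N_{p_0}$ to a matrix $\hat N$ that is \emph{constant in the frame}, then $X\hat N|_{p_0}=0$ and the Leibniz cross terms between $X\hat N$ and $X\opL$ contribute nothing; the remaining cross terms in $\Hess^b(X,X)g|_{p_0}$ have the shape $\sum N_{p_0}^{\bar kj}\Gamma_{X,j}^{\ell}\tau^3_{p_0}(X,L_\ell,\bar L_k)$ (and conjugates, and $\theta^\ell$-analogues), which are not squares and carry no sign. Your assertion that the ``remainders'' are nonnegative for $X$ failing~\eqref{eq:kernel_hypothesis} is therefore unjustified; in that regime, the theorem's hypotheses say nothing about $\sum N^{\bar kj}_{p_0}\Hess^b(X,X)\opL(L_j,\bar L_k)$ (indeed $\tau^4_{p_0}(X,X,\cdot,\cdot)$ is not even a tensor on the range of $N_{p_0}$ there, by Proposition~\ref{prop:tau4 is a tensor}), and it can be negative.

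The missing idea is to use a \emph{non-constant} extension of $N_{p_0}$. The paper defines an auxiliary matrix $M$ with $M|_{p_0}=0$ and $XM^{\bar kj}|_{p_0}=\tau^3_{p_0}(X,L_k,\bar L_j)$, and replaces $\hat N$ by $N_t=N_{p_0}+tM$ (suitably padded to keep positive semi-definiteness locally). Because $XN_t|_{p_0}=t\,\tau^3_{p_0}(X,\cdot,\cdot)$, the Leibniz cross term in $\Hess^b(X,X)g|_{p_0}$ becomes a genuine square $\sum 2t|\tau^3_{p_0}(X,L_j,\bar L_k)|^2\ge0$, which is bounded below by $2t\kappa>0$ on the complement of a neighborhood $\mathcal O$ of the set $S_0$ of unit directions where~\eqref{eq:kernel_hypothesis} holds. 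Choosing $t$ large then dominates the uncontrolled $\tau^4$-type term for $X\notin\mathcal O$, while~\eqref{eq:tau_4_hypothesis} (together with continuity, giving the open set $\mathcal O$) handles $X\in\mathcal O$. Without this $tM$ device, which your proposal omits, the proof does not close.

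A smaller point: the conclusion that~\eqref{eq:tau_4_hypothesis} forces $N_{p_0}\ne0$ is not a valid deduction unless one first observes (as the paper does implicitly by restricting to unit $X$) that $X=0$ is excluded; the theorem's implication~\eqref{eq:kernel_hypothesis}$\Rightarrow$\eqref{eq:tau_4_hypothesis} is vacuously false at $X=0$ otherwise.
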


\begin{rem}
  Definition \ref{defn:weak Z(q)} is relevant because we will actually show that $\Omega$ satisfies weak $Z(q)$, and the conclusions of Theorem \ref{thm:main_theorem} will follow from Theorem 1.1 in \cite{HaRa15}.
\end{rem}

\begin{rem}
\label{rem:kernel_hypothesis}
  Observe that \eqref{eq:kernel_hypothesis} and the fact that $N_p$ is hermitian guarantee that $\tau^4_p(X,X,\cdot,\cdot)$ is only evaluated on $K_{p,X}^{1,0}\times K_{p,X}^{0,1}$ in \eqref{eq:tau_4_hypothesis}.
\end{rem}

We will see in the proof of Theorem \ref{thm:main_theorem} that the hypotheses of the theorem are sufficiently strong to guarantee that the degenerate points (i.e., those with exactly $n-q-1$ positive eigenvalues) are isolated.  Although the hypotheses of Theorem \ref{thm:main_theorem} are still quite complicated, we note that they have some significant advantages over Definition \ref{defn:weak Z(q)}.  In particular, Definition \ref{defn:weak Z(q)} requires that an appropriate $\Upsilon$ be found out of the class of matrices of $C^2$ functions, while Theorem \ref{thm:main_theorem} only requires that $N_p$ be found in the class of constant-coefficient matrices.  This reduction to a finite-dimensional space introduces the possibility that linear algebra can be used to construct $N_p$.

\subsubsection{Linear Algebra}

Motivated by Theorem \ref{thm:main_theorem}, we now introduce the necessary linear algebra to simplify the hypotheses.  We equip the $\mathbb{R}$-linear space of hermitian $n\times n$ matrices with the real inner product $\left<A,B\right>=\Tr(A B)$.  Note that this inner product is invariant under unitary changes of coordinates, so we may assume that either $A$ or $B$ is diagonal.  Hence, $|A|^2=\left<A,A\right>$ will equal the sum of the squares of the eigenvalues of $A$, so this induces a norm on the space of $n\times n$ hermitian matrices.  Note that this is the $\ell^2$ norm discussed in Section 5.6 of \cite{HoJo85}.  Since $|A|$ is at least as large as the size of the largest eigenvalue of $A$, we also
know $|Av|\leq|A||v|$ for any $n\times 1$ vector $v$.

We will often speak of restricting an $n\times n$ hermitian matrix $M$ to an $m$-dimensional subspace $V$.  By this, we mean the restriction of the quadratic form induced by $M$ to vectors in $V$.  We will often use the fact that if $P_V$ is an $n\times m$ matrix with columns forming an orthonormal basis for $V$, then $M|_V$ can be identified with the matrix $\bar P_V^T M P_V$.

Our fundamental structure will be the following:
\begin{defn}
\label{defn:indefinite}
  Let $\{M_j\}$ be a collection of hermitian $n\times n$ matrices, $n\geq 1$.  We say that $\Span_{\mathbb{R}}\{M_j\}$ is \emph{indefinite} if there does not exist a real linear combination of $\{M_j\}$ that is positive definite.  If $V\subset\mathbb{C}^n$ is a nontrivial vector space, we say that $\Span_{\mathbb{R}}\{M_j\}$ is \emph{indefinite on $V$} if there does not exist a real linear combination of $\{M_j\}$ that is positive definite when restricted to $V$.
\end{defn}
\begin{rem}
Observe that if $V'\subset V$ and $\Span_{\mathbb{R}}\{M_j\}$ is indefinite on $V'$, then $\Span_{\mathbb{R}}\{M_j\}$ is indefinite on $V$.
\end{rem}

\begin{rem}
Saying that $\{M_j\}$ is indefinite on $V$ is equivalent to the statement that $\bar P_V^T M P_V$ is indefinite on $\C^m$, where $m=\dim V$.
\end{rem}

\begin{defn}
\label{defn:minimal}
  Let $\{M_j\}$ be a collection of hermitian $n\times n$ matrices, $n\geq 1$.  We say that a nontrivial vector space $V\subset\mathbb{C}^n$ is
  \emph{minimal with respect to $\Span_{\mathbb{R}}\{M_j\}$} if $\Span_{\mathbb{R}}\{M_j\}$ is indefinite on $V$ but $\Span_{\mathbb{R}}\{M_j\}$ is not indefinite on any nontrivial proper subspace of $V$.
\end{defn}
If $\Span_{\mathbb{R}}\{M_j\}$ is indefinite, then a minimal subspace with respect to $\Span_{\mathbb{R}}\{M_j\}$ must exist, although it may equal $\mathbb{C}^n$.  Minimal subspaces are not necessarily unique, however, as we will see.

With the linear algebra in place, we can state the following two consequences of Theorem \ref{thm:main_theorem}.
\begin{thm}
\label{thm:isolated degeneracy}
  Let $\Omega\subset\mathbb{C}^n$ be a bounded domain with a $C^4$ boundary, and suppose that for some $1\leq q\leq n-1$ the Levi form of $\Omega$ has at most $q-1$ negative eigenvalues and at least $n-q-1$ positive eigenvalues.  Suppose that for each $p\in\bd\Om$ at which the Levi form has exactly $n-q-1$ positive eigenvalues and $X\in T_p(\bd\Om)$ for which $K^{1,0}_{p,X}$ is nontrivial, if $\Span_{Y\in T_p(\bd\Om)}\tau^3_p(Y,\cdot,\cdot)$ is indefinite on $K^{1,0}_{p,X}$ then there exists $Y\in T_p(\bd\Om)$ such that $\tau^3_p(Y,\cdot,\cdot)+\tau^4_p(X,X,\cdot,\cdot)$ is positive definite on $K^{1,0}_{p,X}$.  Then
  \begin{enumerate}
    \item The space of harmonic $(0,q)$-forms $\mathcal{H}^q(\Omega)$ is trivial.

    \item The $\dbar$-Laplacian $\Box^q$ has closed range in $L^2_{(0,q)}(\Omega)$.

    \item The $\dbar$-Neumann operator $N^q$ exists and is continuous in $L^2_{(0,q)}(\Omega)$.

    \item The operator $\dbar$ has closed range in $L^2_{(0,q)}(\Omega)$ and $L^2_{(0,q+1)}(\Omega)$.

    \item The operator $\dbar^*$ has closed range in $L^2_{(0,q)}(\Omega)$ and $L^2_{(0,q-1)}(\Omega)$.
  \end{enumerate}
\end{thm}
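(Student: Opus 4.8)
The plan is to derive the hypotheses of Theorem~\ref{thm:main_theorem} from those of Theorem~\ref{thm:isolated degeneracy} and then quote Theorem~\ref{thm:main_theorem}. Fix a \emph{degenerate} point $p\in\bd\Om$, i.e.\ one at which the Levi form has exactly $n-q-1$ positive eigenvalues, and set $r=q-n_p^-=\dim K^{1,0}_p$. Using the trace inner product $\la A,B\ra=\Tr(AB)$ from the Linear Algebra subsection, I would first restate the requirements on $N_p$ in Theorem~\ref{thm:main_theorem}: writing the $r\times r$ hermitian matrix $N_p$ as a hermitian form on $K^{1,0}_p$ in the basis $\{L^p_j\}_{j=n_p^-+1}^{q}$, condition \eqref{eq:tau_3_hypothesis} says precisely that $N_p$ is orthogonal in the trace inner product to the real span $\Span_{X\in T_p(\bd\Om)}\tau^3_p(X,\cdot,\cdot)$ of hermitian forms on $K^{1,0}_p$; the premise \eqref{eq:kernel_hypothesis} says $\Ran N_p\subseteq K^{1,0}_{p,X}$; and, granting that, \eqref{eq:tau_4_hypothesis} says $\la N_p,\tau^4_p(X,X,\cdot,\cdot)\ra>0$, which is meaningful by Remark~\ref{rem:kernel_hypothesis}.

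First I would show that $\Span_X\tau^3_p(X,\cdot,\cdot)$ is necessarily \emph{indefinite on $K^{1,0}_p$}. If not, then $\tau^3_p(Y_0,\cdot,\cdot)$ is positive definite on $K^{1,0}_p$ for some $Y_0\in T_p(\bd\Om)$, so $\tau^3_p(-Y_0,\cdot,\cdot)$ is negative definite there. Tracking the Levi eigenvalues along a boundary curve through $p$ with tangent $-Y_0$: the $n-q-1$ positive and $n_p^-$ negative eigenvalues persist by continuity, while the $r$ eigenvalues vanishing at $p$ have first derivatives equal to the eigenvalues of $\tau^3_p(-Y_0,\cdot,\cdot)|_{K^{1,0}_p}<0$, so for small positive parameter the Levi form is negative definite on an $(n_p^-+r)$-dimensional subspace (a short Schur-complement estimate shows the off-diagonal blocks between the negative eigenspaces and $K^{1,0}_p$ are harmless, since they and the kernel block vanish to first order while the negative block does not). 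Hence nearby boundary points carry at least $n_p^-+r=q$ negative eigenvalues, contradicting the standing hypothesis that the Levi form has at most $q-1$ negative eigenvalues on $\bd\Om$. The same eigenvalue tracking is what makes the degenerate points isolated.

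Now, since $\Span_X\tau^3_p(X,\cdot,\cdot)$ is indefinite on $K^{1,0}_p$, the linear algebra of Section~\ref{sec:indefinite_subspaces} furnishes a \emph{nonzero} positive semi-definite matrix $N_p$ supported on $K^{1,0}_p$ with $\la N_p,\tau^3_p(X,\cdot,\cdot)\ra=0$ for all $X$ — establishing \eqref{eq:tau_3_hypothesis} — and one may even arrange $\Ran N_p$ to be a minimal subspace for $\Span_X\tau^3_p(X,\cdot,\cdot)$, although that refinement is not needed. For \eqref{eq:tau_4_hypothesis}, take any $X$ with $\Ran N_p\subseteq K^{1,0}_{p,X}$. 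Because $N_p$ is a nonzero positive semi-definite matrix pairing to zero with every $\tau^3_p(Y,\cdot,\cdot)$, no $\tau^3_p(Y,\cdot,\cdot)$ is positive definite on $\Ran N_p$; thus $\Span_Y\tau^3_p(Y,\cdot,\cdot)$ is indefinite on $\Ran N_p$ and hence, by the remark after Definition~\ref{defn:indefinite}, indefinite on $K^{1,0}_{p,X}$. The hypothesis of Theorem~\ref{thm:isolated degeneracy} then supplies $Y\in T_p(\bd\Om)$ with $\tau^3_p(Y,\cdot,\cdot)+\tau^4_p(X,X,\cdot,\cdot)$ positive definite on $K^{1,0}_{p,X}$, in particular on $\Ran N_p$. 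Since a nonzero positive semi-definite matrix pairs strictly positively against any form positive definite on its range, and $\la N_p,\tau^3_p(Y,\cdot,\cdot)\ra=0$,
\[
  0<\la N_p,\,\tau^3_p(Y,\cdot,\cdot)+\tau^4_p(X,X,\cdot,\cdot)\ra=\la N_p,\tau^4_p(X,X,\cdot,\cdot)\ra,
\]
which is \eqref{eq:tau_4_hypothesis}. Carrying this out at every degenerate point verifies the hypotheses of Theorem~\ref{thm:main_theorem}, and conclusions (1)--(5) follow.

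The main obstacle I anticipate is the eigenvalue-perturbation step: identifying the first variation of the vanishing Levi eigenvalues along a boundary curve with the restriction of $\tau^3_p$ to $K^{1,0}_p$ (this uses that $\opL(\cdot,\bar L')$ vanishes at $p$ for $L'\in K^{1,0}_p$, which also makes $\tau^3_p$ well defined there), and checking that the off-diagonal blocks do not disturb the count of negative eigenvalues at nearby points. The other ingredients are comparatively light: the reformulation via the trace inner product is bookkeeping; the passage from ``indefinite'' to ``admits a nonzero orthogonal positive semi-definite matrix'' is the separation statement proved in Section~\ref{sec:indefinite_subspaces}; and the transfer of positivity from $\tau^3+\tau^4$ on $K^{1,0}_{p,X}$ to $\la N_p,\tau^4_p(X,X,\cdot,\cdot)\ra$ is immediate once one notes that $N_p\perp\Span\tau^3$ kills the $\tau^3$ contribution.
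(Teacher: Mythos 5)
Your proof is correct and follows essentially the same route as the paper: establish that $\Span_{X}\tau^3_p(X,\cdot,\cdot)$ is indefinite on $K^{1,0}_p$ (the paper cites Corollary~\ref{cor:counting_negative_eigenvalues} for this; you re-sketch the eigenvalue-perturbation argument, which is the content of Lemma~\ref{lem:counting_negative_eigenvalues}), produce a nontrivial positive semi-definite $N_p$ orthogonal to $\Span_X\tau^3_p(X,\cdot,\cdot)$ in the trace inner product via Lemma~\ref{lem:orthogonal positive_matrix}, and then, whenever $\Ran N_p\subseteq K^{1,0}_{p,X}$, deduce indefiniteness on $K^{1,0}_{p,X}$, invoke the hypothesis to get a $Y$ making $\tau^3_p(Y,\cdot,\cdot)+\tau^4_p(X,X,\cdot,\cdot)$ positive definite on $K^{1,0}_{p,X}$, and pair with $N_p$ to get~\eqref{eq:tau_4_hypothesis} from~\eqref{eq:tau_3_hypothesis}. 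The one place where you are slightly more careful than the paper's text is in the direction of the containment in~\eqref{eq:kernel_hypothesis}: the paper writes that ``$K^{1,0}_{p,X}$ is in the range of $N_p$,'' but what is actually used (and what you state) is $\Ran N_p\subseteq K^{1,0}_{p,X}$, combined with the remark after Definition~\ref{defn:indefinite} to pass from indefiniteness on $\Ran N_p$ to indefiniteness on $K^{1,0}_{p,X}$. Your eigenvalue-tracking sketch is a reasonable outline of Lemma~\ref{lem:counting_negative_eigenvalues} but glosses over the degenerate-eigenvalue and off-diagonal-block issues; since that lemma is already proved in the paper, this part of your argument is redundant rather than a gap, and you correctly flag it as the heaviest ingredient.
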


The proof of
Theorem \ref{thm:isolated degeneracy} does not make full use of the structures introduced in Section \ref{sec:indefinite_subspaces}.
This makes the hypotheses relatively easy to check but excludes some examples.
The slightly more complicated and general theorem below can be proven using the same technique.

\begin{thm}
\label{thm:isolated degeneracy_improved}
Let $\Omega\subset\mathbb{C}^n$ be a bounded domain with a $C^4$ boundary,
and suppose that for some $1\leq q\leq n-1$ the Levi form of $\Omega$ has at most $q-1$ negative eigenvalues and at least $n-q-1$
positive eigenvalues.
Suppose that for each $p\in\bd\Om$ at which the Levi form has exactly $n-q-1$ positive eigenvalues there exists a nontrivial
subspace $V_p\subset K^{1,0}_p$ that is minimal with respect to
$\Span_{Y\in T_p(\bd\Om)}\tau^3_p(Y,\cdot,\cdot)$ with the property that for every $X\in T_p(\bd\Om)$,
if $V_p\subset K^{1,0}_{p,X}$ then there exists $Y\in T_p(\bd\Om)$ such that $\tau^3_p(Y,\cdot,\cdot)+\tau^4_p(X,X,\cdot,\cdot)$
is positive definite on $V_p$.  Then
  \begin{enumerate}
    \item The space of harmonic $(0,q)$-forms $\mathcal{H}^q(\Omega)$ is trivial.

    \item The $\dbar$-Laplacian $\Box^q$ has closed range in $L^2_{(0,q)}(\Omega)$.

    \item The $\dbar$-Neumann operator $N^q$ exists and is continuous in $L^2_{(0,q)}(\Omega)$.

    \item The operator $\dbar$ has closed range in $L^2_{(0,q)}(\Omega)$ and $L^2_{(0,q+1)}(\Omega)$.

    \item The operator $\dbar^*$ has closed range in $L^2_{(0,q)}(\Omega)$ and $L^2_{(0,q-1)}(\Omega)$.
  \end{enumerate}
\end{thm}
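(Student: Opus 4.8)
The plan is to deduce Theorem \ref{thm:isolated degeneracy_improved} from Theorem \ref{thm:main_theorem} by constructing, at each degenerate point $p\in\bd\Om$, a positive semi-definite matrix $N_p$ on the span of the Levi-null eigenvectors $\{L^p_{n_p^-+1},\dots,L^p_q\}$ (note that under the hypothesis the Levi form has exactly $n-q-1$ positive eigenvalues and at most $q-1$ negative ones, so its null space is spanned by $L^p_{n_p^-+1},\dots,L^p_q$, a space of dimension $q-n_p^-$, which we identify with $\C^{q-n_p^-}$). The matrix $N_p$ should be the orthogonal projection onto $V_p\subset K^{1,0}_p$. The key point is that with this choice, the three hypotheses of Theorem \ref{thm:main_theorem} translate exactly into the minimality/indefiniteness statement of the present theorem, via the dictionary ``restricting $\sum N_p^{\bar k j}(\cdot)(L_j^p,\bar L_k^p)$ to $\C^{q-n_p^-}$ equals restricting the corresponding tensor to $V_p$.''

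First I would record the translation of \eqref{eq:tau_3_hypothesis}. Using the identification of $M|_V$ with $\bar P_V^T M P_V$ from the linear-algebra preliminaries, for a fixed $X$ the quantity $\sum_{j,k} N_p^{\bar k j}\tau^3_p(X,L_j^p,\bar L_k^p)$ is $\Tr\big(N_p \,\tau^3_p(X,\cdot,\cdot)\big)$, where $\tau^3_p(X,\cdot,\cdot)$ is the hermitian matrix on $K^{1,0}_p$; since $N_p=P_{V_p}\bar P_{V_p}^T$ is the projection onto $V_p$, this trace equals $\Tr\big(\tau^3_p(X,\cdot,\cdot)|_{V_p}\big)$. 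Now here is the crux: I must show that the hypothesis \eqref{eq:tau_3_hypothesis}, namely that this vanishes for \emph{all} $X$, is automatically satisfied. This is where minimality enters. If $V_p$ is minimal with respect to $\Span_Y \tau^3_p(Y,\cdot,\cdot)$, then $\Span_Y\tau^3_p(Y,\cdot,\cdot)$ is indefinite on $V_p$, so no real linear combination of the $\tau^3_p(Y,\cdot,\cdot)|_{V_p}$ is positive definite; but more is needed — I want that the \emph{trace} against $N_p$ vanishes for each $X$. I expect this to follow from minimality together with the fact that $V_p$ has no proper nontrivial subspace on which the span is indefinite: if $\Tr(\tau^3_p(X_0,\cdot,\cdot)|_{V_p})\neq 0$ for some $X_0$, replacing $X_0$ by $-X_0$ we may assume it is positive, and then one argues (this is the content of the machinery in Section \ref{sec:indefinite_subspaces}, which I would invoke) that $\tau^3_p(X_0,\cdot,\cdot)|_{V_p}$ must have a nontrivial positive-definite part whose complementary subspace contradicts minimality — forcing the trace to be zero. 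I would cite the appropriate structural lemma from Section \ref{sec:indefinite_subspaces} here rather than reprove it.

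Next I would handle \eqref{eq:kernel_hypothesis} and \eqref{eq:tau_4_hypothesis}. Again by hermiticity of $N_p$ and the fact that its range is $V_p$, the vectors $\sum_j N_p^{\bar k j}L_j^p$ span exactly $V_p$; so the condition in \eqref{eq:kernel_hypothesis} that each such vector lies in $K^{1,0}_{p,X}$ is precisely $V_p\subset K^{1,0}_{p,X}$. Under that condition the present theorem supplies $Y\in T_p(\bd\Om)$ with $\tau^3_p(Y,\cdot,\cdot)+\tau^4_p(X,X,\cdot,\cdot)$ positive definite on $V_p$; combined with the vanishing established in the previous paragraph, $\Tr\big(N_p(\tau^3_p(Y,\cdot,\cdot)+\tau^4_p(X,X,\cdot,\cdot))\big)=\Tr\big(N_p\tau^4_p(X,X,\cdot,\cdot)\big)=\Tr\big(\tau^4_p(X,X,\cdot,\cdot)|_{V_p}\big)$, which is strictly positive because the restriction of a positive-definite form to $V_p$ has positive trace and $\tau^3_p(Y,\cdot,\cdot)|_{V_p}$ contributes zero. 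This yields exactly \eqref{eq:tau_4_hypothesis}. Finally, $N_p\geq 0$ by construction (it is an orthogonal projection), so all hypotheses of Theorem \ref{thm:main_theorem} hold at every degenerate $p$, and conclusions (1)--(5) follow verbatim.

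The main obstacle is the argument in the second paragraph: showing that the trace term $\sum N_p^{\bar k j}\tau^3_p(X,L_j^p,\bar L_k^p)$ vanishes for \emph{all} $X$ as an automatic consequence of choosing $N_p$ to project onto a \emph{minimal} indefinite subspace. Intuitively, a nonzero trace would let one peel off a positive-definite direction and descend to a proper subspace on which the $\tau^3$-span is still indefinite, contradicting minimality; but making this rigorous requires the decomposition results for indefinite spans developed in Section \ref{sec:indefinite_subspaces} (in particular the structure of minimal subspaces and the behavior of restrictions of hermitian forms to them), and care is needed because minimal subspaces need not be unique and the ambient $\Span_Y\tau^3_p(Y,\cdot,\cdot)$ may be large. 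I would isolate this as a lemma, prove it with the Section \ref{sec:indefinite_subspaces} machinery, and then the rest of the deduction is the routine dictionary-chasing sketched above.
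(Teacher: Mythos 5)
Your plan is correct in spirit but the specific choice of $N_p$ is wrong, and the gap you flag in your last paragraph is a genuine hole: the statement you hope to prove there is false.

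You propose to take $N_p$ to be the orthogonal projection onto $V_p$, and then to argue that minimality of $V_p$ forces $\Tr\bigl(N_p\,\tau^3_p(X,\cdot,\cdot)\bigr)=\Tr\bigl(\tau^3_p(X,\cdot,\cdot)|_{V_p}\bigr)=0$ for every $X$. This fails already for a minimal two-dimensional $V_p$. Consider
\[
M_1=\begin{pmatrix}-2&0\\0&1\end{pmatrix},\qquad
M_2=\begin{pmatrix}0&1\\1&0\end{pmatrix},\qquad
M_3=\begin{pmatrix}0&i\\-i&0\end{pmatrix}.
\]
Every real combination $aM_1+bM_2+cM_3=\begin{pmatrix}-2a&b+ci\\b-ci&a\end{pmatrix}$ has determinant $-2a^2-b^2-c^2\le 0$, so no combination is positive definite; and the only $v$ with $\bar v^TM_jv=0$ for all $j$ is $v=0$, so no proper line carries an indefinite restriction. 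Thus $\mathbb{C}^2$ is minimal with respect to $\Span_{\mathbb{R}}\{M_j\}$, yet the orthogonal projection onto $\mathbb{C}^2$ is the identity, and $\Tr(I\cdot M_1)=-1\neq 0$. Your peel-off heuristic (``a nonzero trace lets one descend to a proper subspace on which the span is still indefinite'') does not go through: in this example the complementary direction of the positive-definite part of $-M_1$ is $\Span\{(0,1)^T\}$, but $M_1$ restricted there is $1\neq 0$, so the span is \emph{not} indefinite on it. Nothing about minimality forces $\Tr(M_j|_{V_p})=0$; for one-dimensional $V_p$ it happens to be automatic, which may be why your intuition seemed plausible, but in higher dimensions the projection is simply the wrong matrix.

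The paper's actual route is shorter and uses Lemma~\ref{lem:minimal_with_N} directly: minimality of $V_p$ is \emph{equivalent} to the existence of a positive semi-definite $N_p$, unique up to positive scalar, whose range is exactly $V_p$ and which satisfies $\left<\tau^3_p(X,\cdot,\cdot),N_p\right>=0$ for all $X$. That $N_p$ (in the example above, a positive multiple of $\mathrm{diag}(1,2)$, not of $I$) is the one to feed into Theorem~\ref{thm:main_theorem}. Once you replace the projection by this $N_p$, \eqref{eq:tau_3_hypothesis} is immediate, the identification of \eqref{eq:kernel_hypothesis} with $V_p\subset K^{1,0}_{p,X}$ still goes through because $\Ran(N_p)=V_p$, and your computation that a positive definite form on $V_p$ has positive trace against a positive semi-definite $N_p$ with range $V_p$ correctly delivers \eqref{eq:tau_4_hypothesis}. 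So everything else in your sketch is fine; the single fix needed is to take $N_p$ from Lemma~\ref{lem:minimal_with_N} rather than positing the orthogonal projection and trying to show after the fact that it works.
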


%
%
\section{Proof of Theorem \ref{thm:main_theorem}}
\label{sec:main_proofs}

\subsection{CR Geometry}
\label{sec:CR_geometry}

Before we prove the main theorems, we go through the CR geometry to show that $\tau^4$ is indeed a tensor.
In what follows, we use the notation of Remark \ref{rem:notation} and the subsequent paragraph.

For $X\in T(\bd\Om)$, let $\Gamma_{X,j}^k=\left<\nabla_X L_j,L_k\right>$ when $1\leq j,k\leq n$.  Since $\mathbb{C}^n$ is K\"ahler under the Euclidean metric, $\left<\nabla_X L_j,\bar L_k\right>=0$.  With this notation, we also have
\[
  \nabla_X^b L_j=\sum_{k=1}^{n-1}\Gamma_{X,j}^k L_k+\frac{1}{2}\Gamma_{X,j}^n(L_n-\bar L_n).
\]
We may differentiate the equality $\partial\rho(L_j)=|\partial\rho|\left<L_j,L_n\right>$
by $X$ (noting that $\left<L_j,L_n\right>$ is constant, so $X(|\partial\rho|\left<L_j,L_n\right>)=(X|\partial\rho|)\left<L_j,L_n\right>$) to obtain

\[
 (\nabla_X\partial\rho)(L_j)+|\partial\rho|\Gamma_{X,j}^n=(X|\partial\rho|)\la L_j,L_n\ra.
\]
Hence,
\[
  \Gamma_{X,j}^n=-|\partial\rho|^{-1}(\nabla_X\partial\rho)(L_j)+(X\log|\partial\rho|)\left<L_j,L_n\right>
\]
whenever $1\leq j\leq n$.  We define a tensor on $T(\bd\Om)\times T^{1,0}(\mathbb{C}^n)$ by
\[
  \theta^n(X,L)=-|\partial\rho|^{-1}(\nabla_X\partial\rho)(L)+(X\log|\partial\rho|)\la L,L_n\ra,
\]
so that $\Gamma_{X,j}^n=\theta^n(X,L_j)$.  Since $\left<L_j,L_k\right>$ is constant, we may differentiate this by $X$ to obtain
\begin{equation}
\label{eq:skew_symmetry}
  0=\Gamma_{X,j}^k+\overline{\Gamma_{X,k}^j},
\end{equation}
so the matrix with coefficients $\Gamma_{X,j}^k$ is skew-hermitian.

We can extend $\opL$ to $T^{1,0}(\mathbb{C}^n)\times T^{0,1}(\mathbb{C}^n)$ by requiring $\opL(L_n,\bar L_j)=0$ for all $1\leq j\leq n$.  With this convention, we have for any $X\in T(\bd\Om)$ and $1\leq j,k\leq n$
\begin{equation}
\label{eq:levi_form_derivative}
  X(\opL(L_j,\bar L_k))=(\nabla_X^b\opL)(L_j,\bar L_k)
  +\sum_{\ell=1}^{n-1}\Gamma_{X,j}^\ell\opL(L_\ell,\bar L_k)+\sum_{\ell=1}^{n-1}\overline{\Gamma_{X,k}^\ell}\opL(L_j,\bar L_\ell).
\end{equation}

Hence, if $L,L'\in K_p^{1,0}$, we have
\[
  \tau^3_p(X,L,\bar L')=(\nabla_X^b\opL)(L,\bar L').
\]
Furthermore, for any $1\leq j\leq n$, since $\opL(L_j,\bar L_n)\equiv 0$, \eqref{eq:skew_symmetry} and \eqref{eq:levi_form_derivative} give us
\[
  0=(\nabla_X^b\opL)(L_j,\bar L_n)-\sum_{\ell=1}^{n-1}\theta^n(X,L_\ell)\opL(L_j,\bar L_\ell),
\]
so
\begin{equation}
\label{eq:normal_derivative}
  (\nabla_X^b\opL)(L_j,\bar L_n)=\sum_{\ell=1}^{n-1}\theta^n(X,L_\ell)\opL(L_j,\bar L_\ell).
\end{equation}

By assumption, if $1\leq j,k\leq n-1$ satisfy $\mu_j(p)=0$ and $\mu_k(p)\neq 0$, then $L_j\in\tilde K_p^{1,0}(\bd\Om)$ and $L_k$ is orthogonal to $\tilde K_p^{1,0}(\bd\Om)$, so $\opL(L_j,\bar L_k)\equiv 0$ and evaluating \eqref{eq:levi_form_derivative} at $p$ gives us
\[
  0=(\nabla_X^b\opL)(L_j,\bar L_k)+\Gamma_{X,j}^k\mu_k(p).
\]
Hence, we may define a family of tensors on $T(\bd\Om)\times T^{1,0}(\mathbb{C}^n)$ by
\[
  \theta^k(X,L)=
  -(\mu_k(p))^{-1}(\nabla_X^b\opL)(L,\bar L_k)
\]
and obtain $\Gamma_{X,j}^k=\theta^k(X,L_j)$ whenever $\mu_j(p)=0$ and $\mu_k(p)\neq 0$.  The existence of this tensor is the most important consequence of requiring $L,L'\in\tilde K^{1,0}_p(\bd\Om)$ in the definition of $\tau^4_p$.

To assist in taking second derivatives of $\opL$, we let $S_1$ denote the set of $1\leq \ell\leq n-1$ such that $\mu_\ell(p)=0$ and $S_2$ denote the set of $1\leq \ell\leq n-1$ such that $\mu_\ell(p)\neq 0$.  Fix $X,Y\in T(\bd\Om)$ and $L_j,L_k\in\tilde K^{1,0}_p(\bd\Om)$.  Note that in this case, \eqref{eq:normal_derivative} gives us $(\nabla_X^b\opL)(L_j,\bar L_n)|_p=0$.  With this in mind, differentiating \eqref{eq:levi_form_derivative} again gives us
\begin{multline*}
  YX(\opL(L_j,\bar L_k))|_p=(\nabla_Y^b\nabla_X^b\opL)(L_j,\bar L_k)+\sum_{\ell\in S_1}\Gamma_{Y,j}^\ell\tau^3_p(X,L_\ell,\bar L_k)\\
  +\sum_{\ell\in S_2}\theta^\ell(Y,L_j)(\nabla_X^b\opL)(L_\ell,\bar L_k)+\sum_{\ell\in S_1}\overline{\Gamma_{Y,k}^\ell}\tau^3_p(X,L_j,\bar L_\ell)+\sum_{\ell\in S_2}\overline{\theta^\ell(Y,L_k)}(\nabla_X^b\opL)(L_j,\bar L_\ell)\\
  +\sum_{\ell\in S_1}\Gamma_{X,j}^\ell\tau^3_p(Y,L_\ell,\bar L_k)+\sum_{\ell\in S_2}\theta^\ell(X,L_j) (\nabla^b_Y\opL)(L_\ell,\bar L_k)+\sum_{\ell\in S_2}\mu_\ell(p)\theta^\ell(X,L_j)\overline{\theta^\ell(Y,L_k)}\\
  +\sum_{\ell\in S_1}\overline{\Gamma_{X,k}^\ell}\tau^3_p(Y,L_j,\bar L_\ell)
  +\sum_{\ell\in S_2}\overline{\theta^\ell(X,L_k)}(\nabla^b_Y\opL)(L_j,\bar L_\ell)
  +\sum_{\ell\in S_2}\mu_\ell(p)\theta^\ell(Y,L_j)\overline{\theta^\ell(X,L_k)}.
\end{multline*}
Using this, we may show that for any $L,L'\in K^{1,0}_{p,X}\cap K^{1,0}_{p,Y}$, we have
\begin{multline*}
  \tau^4_p(Y,X,L,\bar L')=(\nabla_Y^b\nabla_X^b\opL)(L,\bar L')
  +\sum_{\ell\in S_2}\theta^\ell(Y,L)(\nabla_X^b\opL)(L_\ell,\bar L')\\
  +\sum_{\ell\in S_2}\overline{\theta^\ell(Y,L')}(\nabla_X^b\opL)(L,\bar L_\ell)
  +\sum_{\ell\in S_2}\theta^\ell(X,L) (\nabla^b_Y\opL)(L_\ell,\bar L')+\sum_{\ell\in S_2}\mu_\ell(p)\theta^\ell(X,L)\overline{\theta^\ell(Y,L')}\\
  +\sum_{\ell\in S_2}\overline{\theta^\ell(X,L')}(\nabla^b_Y\opL)(L,\bar L_\ell)+\sum_{\ell\in S_2}\mu_\ell(p)\theta^\ell(Y,L)\overline{\theta^\ell(X,L')}-\tau^3_p(\nabla^b_Y X,L,\bar L').
\end{multline*}
Hence, even though our definition of $\tau^4_p$ used the local behavior of $L$ and $L'$, the end result is a tensor depending only on the values of $L$ and $L'$ at $p$ and we have therefore proved the following proposition.

\begin{prop}\label{prop:tau4 is a tensor}
For $p\in\partial\Omega$ and $X,Y\in T_p(b\Omega)$, $\tau^4_p(X,Y,\cdot,\cdot)$ is a tensor on
$(K^{1,0}_{p,X}\cap K^{1,0}_{p,Y})\times(K^{0,1}_{p,X}\cap K^{0,1}_{p,Y})$.
\end{prop}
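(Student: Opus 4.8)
The plan is to establish the tensorial property of $\tau^4_p$ by producing an explicit \emph{intrinsic formula} for $\tau^4_p(Y,X,L,\bar L')$ — one whose right-hand side manifestly depends only on the values $L|_p$, $L'|_p$ and on the data $X,Y$, with no reference to how $L,L'$ are extended as sections of $\tilde K^{1,0}_p(\bd\Om)$. Concretely, I would carry out the second-derivative computation of $\opL(L_j,\bar L_k)$ along the lines already laid out in the excerpt: start from \eqref{eq:levi_form_derivative}, which expresses $X(\opL(L_j,\bar L_k))$ in terms of $(\nabla^b_X\opL)(L_j,\bar L_k)$ and Christoffel-type correction terms, then differentiate once more by $Y$. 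The key structural inputs are: (i) $\Gamma^n_{X,j}=\theta^n(X,L_j)$ is tensorial; (ii) on the eigenvector part, $\Gamma^\ell_{X,j}=\theta^\ell(X,L_j)$ is tensorial whenever $\ell\in S_2$ and $L_j\in\tilde K^{1,0}_p$ (this is the tensor $\theta^\ell$ constructed in the excerpt, and is flagged there as "the most important consequence" of the $\tilde K^{1,0}_p$ requirement); and (iii) the skew-hermitian relation \eqref{eq:skew_symmetry}.

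The second step is the reduction to the kernel. When $L=L_j$ and $L'=L_k$ lie in $K^{1,0}_{p,X}\cap K^{1,0}_{p,Y}$, the summands of the big two-derivative expansion that involve $\ell\in S_1$ (the genuinely non-tensorial Christoffel symbols $\Gamma^\ell_{X,j}$, $\Gamma^\ell_{Y,j}$ with $\ell\in S_1$) multiply factors of the form $\tau^3_p(X,L_\ell,\bar L_k)$ or $\tau^3_p(Y,L_j,\bar L_\ell)$; by the definition of $K^{1,0}_{p,X}$ and $K^{1,0}_{p,Y}$ and the hermitian symmetry of $\tau^3_p$, these all vanish. Similarly $(\nabla^b_X\opL)(L_j,\bar L_n)|_p=0$ by \eqref{eq:normal_derivative} together with $L_j\in\tilde K^{1,0}_p$, which kills the normal-direction contributions. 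What survives is precisely the displayed formula for $\tau^4_p(Y,X,L,\bar L')$ in the excerpt: the leading term $(\nabla^b_Y\nabla^b_X\opL)(L,\bar L')$, a collection of $\theta^\ell$-terms with $\ell\in S_2$ (all tensorial), the quadratic $\mu_\ell(p)\theta^\ell\overline{\theta^\ell}$ terms, and the single term $-\tau^3_p(\nabla^b_Y X,L,\bar L')$. Since $\nabla^b_Y$ is a genuine connection, $\nabla^b_Y\nabla^b_X\opL$ is tensorial in its form-arguments, and $\nabla^b_Y X$ is a well-defined element of $T_p(\bd\Om)$ depending only on $X,Y$ near $p$ (not on $L,L'$), so every term on the right depends only on $L|_p$, $L'|_p$. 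That the expression is linear over $\C$ (resp. conjugate-linear) in $L$ (resp. $L'$) is visible termwise. This yields Proposition \ref{prop:tau4 is a tensor}.

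The main obstacle — and the reason the excerpt says "there is a lot of geometry here" — is bookkeeping discipline rather than any single hard idea: one must carefully track which Christoffel symbols $\Gamma^\cdot_{\cdot,\cdot}$ can legitimately be replaced by the tensors $\theta^n,\theta^\ell$ and which cannot, and verify that \emph{every} occurrence of a non-replaceable $\Gamma^\ell_{\cdot,\cdot}$ ($\ell\in S_1$) in the $YX(\opL(L_j,\bar L_k))$ expansion is indeed paired with a $\tau^3_p$-factor that vanishes on $K^{1,0}_{p,X}\cap K^{1,0}_{p,Y}$. One also has to be slightly careful that $\tau^4_p$ as \emph{defined} is $\Hess^b(X,Y)\opL(L,\bar L')=YX(\opL(L,\bar L'))-(\nabla^b_Y X)(\opL(L,\bar L'))$ evaluated along extensions, and to confirm the identity $\Hess^b(X,Y)f = YXf - (\nabla^b_YX)f$ matches the $-\tau^3_p(\nabla^b_YX,L,\bar L')$ correction term that appears — i.e., that when $\nabla^b_YX$ is applied to $\opL(L,\bar L')$ with $L,L'$ in the kernel, only the $\tau^3_p$ piece survives, exactly as in the computation of $\tau^3_p$ itself. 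Once the accounting is complete, the proposition follows immediately since each surviving term is manifestly intrinsic; there is nothing further to prove beyond asserting linearity in the form-arguments, which is inherited from $\opL$, $\nabla^b$, $\tau^3_p$, and $\theta^\ell$ all being tensors.
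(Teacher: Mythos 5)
Your proposal takes essentially the same route as the paper: differentiate \eqref{eq:levi_form_derivative} once more, replace the Christoffel symbols $\Gamma^n_{X,j}$ and $\Gamma^\ell_{X,j}$ ($\ell\in S_2$, $L_j\in\tilde K^{1,0}_p$) by the tensors $\theta^n$, $\theta^\ell$, and observe that all remaining non-tensorial terms with $\ell\in S_1$ are paired with $\tau^3_p$-factors that vanish on $K^{1,0}_{p,X}\cap K^{1,0}_{p,Y}$, while the $L_n$-contributions vanish by \eqref{eq:normal_derivative}. The surviving expression is exactly the paper's explicit intrinsic formula for $\tau^4_p(Y,X,L,\bar L')$, including the $-\tau^3_p(\nabla^b_Y X,L,\bar L')$ correction, and the proposition follows.
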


We need the following lemma to prove the main theorems.
\begin{lem}
\label{lem:counting_negative_eigenvalues}
  Let $\Omega\subset\mathbb{C}^n$ be a bounded domain with $C^4$ boundary.  For any $p\in\bd\Omega$
  and $X, Y\in T_p(\bd\Omega)$, denote the number of negative eigenvalues of $\tau^3_p(X,\cdot,\cdot)$ on $K^{1,0}_p$ by
  $n_{p,X}^-$  and of  $\tau^4_p(X,X,\cdot,\cdot)+\tau^3_p(Y,\cdot,\cdot)$ on $K^{1,0}_{p,X}$ by $n_{p,X,Y}^-$.
  For $1\leq q\leq n-1$, if $n_p^-\leq q-1$ for all $p\in\bd\Omega$, then $n_p^-+n_{p,X}^-+n_{p,X,Y}^-\leq q-1$ for all $p\in\bd\Omega$ and $X,Y\in T_p(\bd\Omega)$.
\end{lem}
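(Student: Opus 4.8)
The plan is to argue that the three numbers $n_p^-$, $n_{p,X}^-$, and $n_{p,X,Y}^-$ count negative eigenvalues of quadratic forms living on a nested chain of subspaces, and that on each successive step one ``uses up'' only eigenvalues that were zero on the previous step. Concretely, let $r=\dim K^{1,0}_p$ and recall that the Levi form restricted to $T^{1,0}_p(\bd\Om)$ has exactly $n_p^-$ negative eigenvalues and an $r$-dimensional kernel $K^{1,0}_p$. First I would observe that $\tau^3_p(X,\cdot,\cdot)$ is a hermitian form on $K^{1,0}_p$; let $n_{p,X}^-$ be the number of its negative eigenvalues, $z_{p,X}$ the number of its zero eigenvalues, so that its kernel is precisely $K^{1,0}_{p,X}$ and $\dim K^{1,0}_{p,X}=z_{p,X}$. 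Then $\tau^4_p(X,X,\cdot,\cdot)+\tau^3_p(Y,\cdot,\cdot)$ is a hermitian form on $K^{1,0}_{p,X}$ (a well-defined tensor there by Proposition~\ref{prop:tau4 is a tensor}), and $n_{p,X,Y}^-$ counts its negative eigenvalues, so certainly $n_{p,X,Y}^-\leq z_{p,X}$ and $n_{p,X}^-+z_{p,X}\leq r$.

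The heart of the argument is a perturbation/eigenvalue-interlacing statement: for $t>0$ small, the Levi form plus a small multiple of a suitable extension of $\tau^3_p(X,\cdot,\cdot)$ on all of $T^{1,0}_p$, further perturbed by a smaller multiple of an extension of $\tau^4_p(X,X,\cdot,\cdot)+\tau^3_p(Y,\cdot,\cdot)$, has at least $n_p^-+n_{p,X}^-+n_{p,X,Y}^-$ negative eigenvalues. Indeed, by a standard min-max argument: the Levi form contributes $n_p^-$ negative directions from the complement of $K^{1,0}_p$; on $K^{1,0}_p$ the Levi form vanishes, so for $t$ small the dominant term there is $t\,\tau^3_p(X,\cdot,\cdot)$, which contributes $n_{p,X}^-$ more negative directions on $K^{1,0}_p$; on the kernel $K^{1,0}_{p,X}$ of that form, the dominant remaining term is $t^2(\tau^4_p(X,X,\cdot,\cdot)+\tau^3_p(Y,\cdot,\cdot))$, contributing $n_{p,X,Y}^-$ more. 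Making these ``extensions'' precise is the one genuinely fiddly point — one must extend the forms defined on the subspaces $K^{1,0}_p$, $K^{1,0}_{p,X}$ to forms on $T^{1,0}_p$ (e.g.\ via the orthogonal projections, using the $C^2$ projection onto $\tilde K^{1,0}_p$ from Lemma~A.1 of \cite{HaRa17z}, or simply abstractly at the single point $p$), and verify that the eigenvalue counts add up. The clean way is the following lemma, proved by min-max: if $H$ is a hermitian form on $\C^N$ with $k_0$ negative and $m_0$ zero eigenvalues, $A$ is hermitian with $k_1$ negative eigenvalues when restricted to $\ker H$, and $B$ is hermitian with $k_2$ negative eigenvalues when restricted to $\ker H\cap\ker(A|_{\ker H})$, then $H+tA+t^2B$ has at least $k_0+k_1+k_2$ negative eigenvalues for all sufficiently small $t>0$.

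Once that lemma is in hand, I would produce, for a defining function $\rho$, a hermitian matrix of \emph{constants} $\Upsilon_p$ at $p$ — roughly $\Upsilon_p$ built from projecting onto the negative Levi directions, plus $t$ times the projection realizing the $\tau^3$ negative directions, plus $t^2$ times the piece realizing the $\tau^4+\tau^3$ negative directions — which, when plugged into the Levi form, shows $\mu_1+\cdots+\mu_q - \sum \Upsilon_p^{\bar k j}\rho_{j\bar k}$ can be made to fail to be nonnegative unless $n_p^-+n_{p,X}^-+n_{p,X,Y}^-\leq q-1$; but actually the cleanest route is the contrapositive: \emph{assume} $n_p^-+n_{p,X}^-+n_{p,X,Y}^->q-1$ at some $p$ for some $X,Y$, i.e.\ $\geq q$, and derive a contradiction with the global convexity fact cited in the introduction — that the boundary of a bounded domain contains a point of convexity, hence (for the relevant eigenvalue sum) $\mu_1+\cdots+\mu_q\geq 0$ somewhere is not enough; rather, one uses that at \emph{every} point the Levi form of a bounded domain has at least $n-q$ positive eigenvalues is false in general, so instead I would simply invoke that for a bounded domain $\mu_{n-1}>0$ somewhere and transport negativity. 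In fact the sharp and correct mechanism is: the form $H+tA+t^2B$ above, realized as an honest $(1,1)$-form on $\C^n$ by adding the complex-normal direction, is a perturbation of the complex Hessian of a defining function in directions tangent to $\bd\Om$; if it had $\geq q$ negative eigenvalues at $p$, then by \cite{Hor65} (the failure of $Z(n-q)$ type counting) together with the fact that a bounded domain cannot have the Levi form with too many negative eigenvalues everywhere, we reach a contradiction. So the final step reduces to: \textbf{$n_p^-\leq q-1$ everywhere on a bounded domain forces the combined count $\leq q-1$}, and the bridge is the eigenvalue-addition lemma plus the classical obstruction that a bounded domain's boundary has a point where the Levi form is positive definite, which bounds the number of negative eigenvalues of any small tangential hermitian perturbation built from $\tau^3,\tau^4$ by $q-1$ as well.

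I expect the \textbf{main obstacle} to be the bookkeeping in the eigenvalue-addition lemma — in particular, checking that the $\tau^3$ and $\tau^4+\tau^3$ forms, which are \emph{a priori} only defined on the subspaces $K^{1,0}_p$ and $K^{1,0}_{p,X}$ respectively, can be extended to ambient hermitian forms so that the min-max counting genuinely yields $k_0+k_1+k_2$ negative eigenvalues and not fewer, uniformly as $t\to 0^+$. The geometry of the excerpt (the tensors $\theta^k$, the $C^2$ projection onto $\tilde K^{1,0}_p$) is exactly what makes these extensions available and independent of choices, so the lemma's proof should go through; but it is the step where sign conventions and the ordering of the perturbation scales $t$ versus $t^2$ must be tracked with care, and where one must be sure the ``leftover'' positive or zero eigenvalues of $H$ and of $A|_{\ker H}$ do not secretly become negative under the higher-order perturbations.
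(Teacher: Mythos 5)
Your proposal correctly identifies the two-scale perturbation mechanism ($\tau^3$ at order $t$, $\tau^4+\tau^3(Y)$ at order $t^2$) and states essentially the right abstract eigenvalue-addition lemma, but there is a genuine gap in the final step, and it is not where you predicted. The missing idea is that the perturbed form $H+tA+t^2B$ must be realized as the \emph{actual Levi form at a nearby boundary point} $\gamma_t$. The paper constructs a curve $\gamma_t\subset\bd\Omega$ with $\gamma_0=p$, $\gamma_0'=X$, and second-order behavior engineered so that for any $C^2$ function $f$ on $\bd\Omega$ one has $\frac{d^2}{dt^2}f(\gamma_t)\big|_{t=0}=\big(\Hess^b(X,X)+Y\big)f\big|_p$. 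Applied to $f=\opL(Z,\bar Z)$ in a $C^2$ frame adapted to $\tilde K^{1,0}_p$, this produces a Taylor expansion whose $t$ coefficient is $\tau^3_p(X,\cdot,\cdot)$ and whose $t^2$ coefficient is exactly $\tau^4_p(X,X,\cdot,\cdot)+\tau^3_p(Y,\cdot,\cdot)$ --- this is in fact why $Y$ appears at all, as the second-order part of the curve, something your proposal takes for granted. A contradiction argument with $O(\sqrt{t})$ bounds to control mixing between the eigenvalue blocks then shows the Levi form at $\gamma_t$ is negative definite on an $m$-dimensional subspace for $t\neq 0$ small, where $m=n_p^-+n_{p,X}^-+n_{p,X,Y}^-$, and the hypothesis $n_{\gamma_t}^-\leq q-1$ applied at $\gamma_t$ gives $m\leq q-1$ immediately.

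By contrast, your several candidate closing arguments --- constructing a constant matrix $\Upsilon_p$, invoking the ``point of convexity'' fact, invoking H\"ormander's $Z(n-q)$ counting, or claiming that boundedness of $\Omega$ bounds the number of negative eigenvalues of an abstract small tangential hermitian perturbation --- are not the right mechanism and would not close the argument. The lemma is purely local: no global convexity enters, and the pointwise hypothesis $n_p^-\leq q-1$ is applied directly at the nearby boundary points $\gamma_t$, not deduced from any boundedness obstruction. You have also misidentified the ``main obstacle'': the eigenvalue bookkeeping in your abstract lemma is routine once the extensions are supplied by a $C^2$ frame adapted to $\tilde K^{1,0}_p$ (exactly as Lemma A.1 of \cite{HaRa17z} provides). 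The real work is constructing $\gamma_t$ so that differentiating $\opL$ along it produces the tensors $\tau^3$ and $\tau^4+\tau^3(Y)$ at the correct orders, thereby converting the abstract perturbation count into a statement about $n_{\gamma_t}^-$.
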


\begin{proof}
  For $p\in\bd\Omega$, let $X,Y\in T_p(\bd\Omega)$.  We write
  \[
    X=\sum_{j=1}^n 2\re\left(a^j\frac{\partial}{\partial z_j}\right), Y=\sum_{j=1}^n 2\re\left(b^j\frac{\partial}{\partial z_j}\right),
    \text{ and } \nabla_X X - \nabla^b_X X=\sum_{j=1}^n 2\re\left(c^j\frac{\partial}{\partial z_j}\right).
  \]
  For $t\in\mathbb{R}$, set $\tilde\gamma_t^j=p_j+t a^j+\frac{1}{2}t^2(b^j+c^j)$.  If $\rho$ is a $C^4$ defining function for $\Omega$, we have $|\rho(\tilde\gamma_t)|\leq O(t^3)$, so by projecting $\tilde\gamma_t$ onto the nearest boundary point we obtain a $C^3$ curve $\gamma_t\subset\bd\Omega$ for sufficiently small $t$ such that $|\gamma_t-\tilde\gamma_t|\leq O(t^3)$.  Furthermore, if $f$ is a $C^2$ function on some neighborhood of $p$ in $\bd\Omega$, we have $\frac{d}{dt}f(\gamma_t)|_{t=0}=(Xf)|_p$ and
 $\frac{d^2}{dt^2}f(\gamma_t)|_{t=0}=((\Hess^b(X,X)+Y)f)|_p$.

Let $\mu_j$ and $L_j^\mu$ be as in Definition \ref{defn:tilde_k}.  Let $\{L_1,\ldots,L_{n-1}\}$ be an orthonormal basis for $T^{1,0}(\bd\Omega)$
with $C^2$ coefficients such that $L_j|_p=L_j^\mu|_p$ and $L_j\in\tilde K_p^{1,0}(\bd\Omega)$ whenever $\mu_j(p)=0$.
After an orthonormal change of coordinates preserving these properties, we may assume that $\tau^3_p(X,L_j^\mu|_p,\bar L_k^\mu|_p)$ is
diagonal on $K^{1,0}_p$ with increasing entries on the diagonal, and $\tau^4_p(X,X,L_j^\mu|_p,\bar L_k^\mu|_p)+\tau^3_p(Y,L_j^\mu|_p,\bar L_k^\mu|_p)$
is diagonal on $K^{1,0}_{p,X}$ with increasing entries on the diagonal.

 Let $m=n_p^-+n_{p,X}^-+n_{p,X,Y}^-$.  We claim that $\opL$ is negative definite on $\Span_{\mathbb{C}}\{L_1,\ldots,L_m\}$ at $\gamma_t$ for all $t\neq 0$ sufficiently small.  Suppose, on the contrary, that for every $\ell\in\mathbb{N}$ there exists $0<t_\ell<1/\ell$ and a vector $\zeta_\ell\in\mathbb{C}^m$ such that $\opL(Z_\ell,\bar Z_\ell)\geq 0$
 at $\gamma_{t_\ell}$ when $Z_\ell=\sum_{j=1}^m \zeta^j_\ell L_j|_{\gamma_{t_\ell}}$.  We may normalize so that $|\zeta_\ell|\equiv 1$.  Note that $Z_\ell$ extends to a neighborhood of $p$ by keeping the coefficients $\zeta_\ell^j$ constant with respect to the basis $\{L_j\}$.  Since each $L_j$ is $C^2$ and $|\zeta_\ell|$ is uniformly bounded, $Z_\ell$ will have a uniformly bounded $C^2$ norm.  By construction of the basis $\{L_j\}$,
  \[
    \opL(Z_\ell,\bar Z_\ell)|_{\gamma_{t_\ell}}
    =\sum_{j,k=1}^{n_p^-}\zeta^j_\ell\bar \zeta^k_\ell\opL(L_j,\bar L_k)+\sum_{j,k=n_p^-+1}^{m}\zeta^j_\ell\bar \zeta^k_\ell\opL(L_j,\bar L_k).
  \]
  Since the first sum is strictly negative if $\ell$ is sufficiently large, we may assume that $\zeta^j_\ell=0$ whenever $1\leq j\leq n_p^-$.  Restricted to these coordinates, a Taylor expansion of $\opL(Z_\ell,\bar Z_\ell)|_{\gamma_{t_\ell}}$ as a function of $t$, combined with the construction
of $\gamma_t$ and the vectors $\{L_j\}$ shows that
  \[
    \opL(Z_\ell,\bar Z_\ell)|_{\gamma_{t_\ell}}=t_\ell\tau^3_p(X,Z_\ell|_p,\bar Z_\ell|_p)+O(t_\ell^2)
    =t_\ell\sum_{j=n_p^-+1}^{n_p^-+n_{p,X}^-}|\zeta_\ell^j|^2\tau^3_p(X,L_j^\mu,\bar L_j^\mu)+O(t_\ell^2).
  \]
  Since $\tau^3_p(X,L_j^\mu,\bar L_j^\mu)<0$ when $n_p^-+1\leq j\leq n_p^-+n_{p,X}^-$, we must have $|\zeta_\ell^j|\leq O(\sqrt{t_\ell})$ whenever $n_p^-+1\leq j\leq n_p^-+n_{p,X}^-$ and $\ell$ is sufficiently large.  Hence, because of our choice of coordinates,
  \begin{multline*}
    \opL(Z_\ell,\bar Z_\ell)|_{\gamma_{t_\ell}}
    =t_\ell\tau^3_p(X,Z_\ell|_p,\bar Z_\ell|_p)+t_\ell^2\big(\tau^4_p(X,X,Z_\ell|_p,\bar Z_\ell|_p)+\tau^3_p(Y,Z_\ell|_p,\bar Z_\ell|_p)\big)+o(t_\ell^2)\\
    =t_\ell\sum_{j=n_p^-+1}^{n_p^-+n_{p,X}^-}|\zeta_\ell^j|^2\tau^3_p(X,L_j^\mu,\bar L_j^\mu)\\
    +t_\ell^2\sum_{j=n_p^-+n_{p,X}^-+1}^{m}|\zeta^\ell_j|^2\big(\tau^4_p(X,X,L_j^\mu,\bar L_j^\mu)+\tau^3_p(Y,L_j^\mu,\bar L_j^\mu)\big)+o(t_\ell^2).
  \end{multline*}
  In order for this to be positive, we must have $|\zeta_\ell^j|\rightarrow 0$ whenever $n_p^-+n_{p,X}^-+1\leq j\leq m$, and hence $\zeta^\ell\rightarrow 0$, contradicting the fact that $|\zeta_\ell|\equiv 1$.

  Now, we know that the Levi form must have at least $m$ negative eigenvalues on $\gamma_t$ for $t$ sufficiently small.  By assumption, we must have $m\leq q-1$, and our conclusion follows.
\end{proof}

\begin{cor}
\label{cor:counting_negative_eigenvalues}
  Let $\Omega\subset\mathbb{C}^n$ be a bounded domain with a $C^4$ boundary, and suppose that for some $1\leq q\leq n-1$ and every $p\in b\Omega$ the Levi form of $\Omega$ at $p$ has at most $q-1$ negative eigenvalues and at least $n-q-1$ positive eigenvalues.  If the Levi form of $\Omega$ has exactly $n-q-1$ positive eigenvalues at some $p\in\bd\Omega$, then $\Span_{Y\in T_p(\bd\Omega)}\tau^3_p(Y,\cdot,\cdot)$ is indefinite on $K^{1,0}_p$.  At such a $p$, if there exists $X\in T_p(\bd\Omega)$ such that $\tau^3_p(X,\cdot,\cdot)$ is positive semi-definite on $K^{1,0}_p\times K^{0,1}_p$, then $\Span_{Y\in T_p(\bd\Omega)}\tau^3_p(Y,\cdot,\cdot)$ is indefinite on $K^{1,0}_{p,X}$ and either
  \[
    \Span_{Y\in T_p(b\Omega)}\tau^3_p(Y,\cdot,\cdot) + \tau^4_p(X,X,\cdot,\cdot)
  \]
is indefinite on $K^{1,0}_{p,X}$ or there exists $Y\in T_p(\bd\Omega)$ such that $\tau_p^3(Y,\cdot,\cdot)+\tau_p^4(X,X,\cdot,\cdot)$ is positive
definite on $K^{1,0}_{p,X}\times K^{0,1}_{p,X}$.
\end{cor}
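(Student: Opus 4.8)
The plan is to extract everything from Lemma \ref{lem:counting_negative_eigenvalues} by a careful accounting of eigenvalue counts, using the elementary fact that a hermitian form on a nontrivial space is indefinite precisely when no real scalar multiple of it is positive definite, and that a span of hermitian forms is indefinite on $V$ exactly when every form in the span has a non-positive direction in $V$. First I would observe that under the hypothesis that the Levi form has exactly $n-q-1$ positive eigenvalues at $p$, the kernel $K^{1,0}_p$ has dimension $\dim K^{1,0}_p = (n-1)-n_p^- -(n-q-1) = q-n_p^-\geq 1$, so $K^{1,0}_p$ is nontrivial. Now suppose toward a contradiction that $\Span_{Y}\tau^3_p(Y,\cdot,\cdot)$ is \emph{not} indefinite on $K^{1,0}_p$; then there is $Y_0\in T_p(\bd\Om)$ with $\tau^3_p(Y_0,\cdot,\cdot)$ positive definite on $K^{1,0}_p$, hence $n_{p,Y_0}^- = 0$ \emph{and} $K^{1,0}_{p,Y_0}=\{0\}$, so trivially $n_{p,Y_0,Y}^-=0$ for any $Y$. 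But Lemma \ref{lem:counting_negative_eigenvalues} then only gives $n_p^-\leq q-1$, which is consistent, so this is not yet a contradiction — instead I would run the argument with a \emph{different} choice: replace $Y_0$ by $-Y_0$ only changes the sign, so I would instead use the geodesic/curve construction directly, or more efficiently deduce indefiniteness from the fact that $\bd\Om$ is the boundary of a bounded domain. Actually the cleanest route: a bounded domain with exactly $n-q-1$ positive Levi eigenvalues at $p$ cannot have $\tau^3_p(Y,\cdot,\cdot)$ positive definite on $K^{1,0}_p$ for any $Y$, because Lemma \ref{lem:counting_negative_eigenvalues} applied with $X=Y_0$, together with the fact that pushing along $\gamma_t$ in the $-Y_0$ direction would force the Levi form to gain a positive eigenvalue in a kernel direction on one side and lose one on the other — so I would instead argue that a positive definite $\tau^3_p(Y_0,\cdot,\cdot)$ on $K^{1,0}_p$ forces $n-q$ positive Levi eigenvalues on $\gamma_{t}$ for small $t>0$ (a $Z(q)$-type count) while $n-q-1$ positive and an extra zero on the other side is impossible for the Levi form of a domain, since by the Morse-theoretic/convexity argument every boundary point has $\geq n-q$ positive eigenvalues only if... — this needs the same Taylor expansion as in the Lemma, run in both time directions.

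The cleaner and self-contained approach I will actually take: indefiniteness of $\Span_Y \tau^3_p(Y,\cdot,\cdot)$ on $K^{1,0}_p$ is \emph{equivalent} to $n^-_{p,Y}\geq 1$ for all $Y$ failing — no. Let me restate: if the span were not indefinite on $K^{1,0}_p$, pick $Y_0$ with $\tau^3_p(Y_0,\cdot,\cdot)>0$ on $K^{1,0}_p$. Applying the curve construction of Lemma \ref{lem:counting_negative_eigenvalues} with $X=Y_0$ and $Y=0$, along $\gamma_{-t}$ (i.e. replacing $t$ by $-t$), the leading term $t\,\tau^3_p(Y_0,Z|_p,\bar Z|_p)$ becomes $-t\,\tau^3_p(Y_0,Z|_p,\bar Z|_p)<0$ for $t>0$ on all of $K^{1,0}_p$, so the Levi form becomes negative definite on $\Span_{\mathbb C}\{L_{n_p^-+1},\dots,L_{n-1}\}$ at $\gamma_{-t}$; together with the $n_p^-$ strictly negative directions this gives $\geq n_p^- + \dim K^{1,0}_p = q$ negative eigenvalues at $\gamma_{-t}$, contradicting $n^-\leq q-1$. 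This proves the first assertion. For the second: given $X$ with $\tau^3_p(X,\cdot,\cdot)\geq 0$ on $K^{1,0}_p$, then $n^-_{p,X}=0$, and restricting the span to the subspace $K^{1,0}_{p,X}\subsetneq$ or $=K^{1,0}_p$, indefiniteness on $K^{1,0}_{p,X}$ follows by the same two-directional Taylor argument: if some $\tau^3_p(Y_0,\cdot,\cdot)+\tau^4_p(X,X,\cdot,\cdot)$ were positive definite on $K^{1,0}_{p,X}$ — wait, that is exactly the dichotomy we want to \emph{allow}.

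So the final structure is: (i) indefiniteness of $\Span_Y\tau^3_p$ on $K^{1,0}_p$ as above; (ii) assuming $\tau^3_p(X,\cdot,\cdot)\geq 0$ on $K^{1,0}_p$, deduce indefiniteness of $\Span_Y\tau^3_p$ on $K^{1,0}_{p,X}$ — here I use that on $K^{1,0}_{p,X}$ the form $\tau^3_p(X,\cdot,\cdot)$ vanishes identically (that is the definition of $K^{1,0}_{p,X}$!), so the leading-order term in the Taylor expansion along $\gamma_{\pm t}$ is the $t^2$ term $\tau^4_p(X,X,\cdot,\cdot)+\tau^3_p(Y,\cdot,\cdot)$, whose sign is \emph{unchanged} under $t\mapsto -t$; therefore if $\Span_Y\big(\tau^3_p(Y,\cdot,\cdot)+\tau^4_p(X,X,\cdot,\cdot)\big)$ were \emph{not} indefinite on $K^{1,0}_{p,X}$ we would pick $Y_0$ making $\tau^4_p(X,X,\cdot,\cdot)+\tau^3_p(Y_0,\cdot,\cdot)$ positive definite there — which is precisely the second alternative in the statement; (iii) finally, $\Span_Y\tau^3_p$ indefinite on $K^{1,0}_{p,X}$ follows because indefiniteness on the smaller object $\Span_Y\tau^3_p + \tau^4_p(X,X,\cdot,\cdot)$, when it holds, forces indefiniteness of $\Span_Y\tau^3_p$ on that space too? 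No — actually $\Span_Y\tau^3_p$ indefinite on $K^{1,0}_{p,X}$ is the easy part: if $\tau^3_p(Y_0,\cdot,\cdot)>0$ on $K^{1,0}_{p,X}$ then running $\gamma_{-t}$ with this $Y_0$ as the ``$Y$'' (not ``$X$'') in Lemma \ref{lem:counting_negative_eigenvalues}, keeping the original $X$, the $t^2$ coefficient $\tau^4_p(X,X,\cdot,\cdot)+\tau^3_p(Y_0,\cdot,\cdot)$ — hmm, its sign is not controlled. The robust fix is to scale: replace $Y_0$ by $\lambda Y_0$ with $\lambda$ large; since $\tau^3_p$ is linear in $Y$ but $\tau^4_p(X,X,\cdot,\cdot)$ does not scale, for $\lambda\gg 0$ the form $\tau^4_p(X,X,\cdot,\cdot)+\tau^3_p(\lambda Y_0,\cdot,\cdot)$ is positive definite on $K^{1,0}_{p,X}$, landing us again in the second alternative; and if no such $Y_0$ exists at all then $\Span_Y\tau^3_p$ is indefinite on $K^{1,0}_{p,X}$ by definition, and we are in the first alternative. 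Thus the dichotomy in the statement is exactly the negation of ``$\Span_Y\tau^3_p$ not indefinite on $K^{1,0}_{p,X}$,'' handled by scaling, and the separate claim that $\Span_Y\tau^3_p$ \emph{is} indefinite on $K^{1,0}_{p,X}$ whenever $\tau^3_p(X,\cdot,\cdot)\ge 0$ on $K^{1,0}_p$ follows from step (i) applied to the subspace together with $n^-_{p,X}=0$ so that $\dim K^{1,0}_{p,X}$ fits inside the budget $q-1-n_p^-$ only if the span is indefinite there.

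The main obstacle I expect is bookkeeping the eigenvalue budget precisely enough to convert ``not indefinite'' into a Levi form with $\ge q$ negative eigenvalues somewhere on the curve, in particular making sure the two time directions $t\mapsto\pm t$ are exploited correctly (odd-order terms flip, even-order terms do not) and that the scaling trick in $Y$ genuinely produces the positive definite form claimed; everything else is a direct citation of Lemma \ref{lem:counting_negative_eigenvalues} and Proposition \ref{prop:tau4 is a tensor}.
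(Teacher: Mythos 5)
The key gap is in your treatment of the second assertion, that $\Span_{Y}\tau^3_p(Y,\cdot,\cdot)$ is indefinite on $K^{1,0}_{p,X}$. You try to derive this purely from the eigenvalue budget of Lemma~\ref{lem:counting_negative_eigenvalues}, writing that ``$\dim K^{1,0}_{p,X}$ fits inside the budget $q-1-n_p^-$ only if the span is indefinite there.'' This does not work: the budget inequality with the given $X$ (so $n^-_{p,X}=0$) only constrains $n^-_{p,X,Y}$, which counts negative eigenvalues of $\tau^4_p(X,X,\cdot,\cdot)+\tau^3_p(Y,\cdot,\cdot)$, not of $\tau^3_p(Y,\cdot,\cdot)$ alone, and since generically $\dim K^{1,0}_{p,X}\le q-1-n_p^-$ already holds with room to spare, you never reach a contradiction. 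To rule out a $Y_0$ with $\tau^3_p(Y_0,\cdot,\cdot)$ positive definite on $K^{1,0}_{p,X}$, you would need to mix directions: show that $\tau^3_p(-\alpha X-\varepsilon Y_0,\cdot,\cdot)$ is negative definite on all of $K^{1,0}_p$ for suitable $\alpha\gg\varepsilon>0$ and then apply the budget. This requires a genuine Cauchy--Schwarz absorption of cross terms between $K^{1,0}_{p,X}$ and its orthogonal complement inside $K^{1,0}_p$; the paper packages exactly this content as the stand-alone linear-algebra Lemma~\ref{lem:semi_definite_direction} and cites it here. Without that lemma (or the $(-\alpha X-\varepsilon Y_0)$ argument) the indefiniteness on $K^{1,0}_{p,X}$ is simply unproved.

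The dichotomy part is also incomplete as written. The first alternative in the statement refers to indefiniteness of the collection $\{\tau^3_p(Y,\cdot,\cdot)\}\cup\{\tau^4_p(X,X,\cdot,\cdot)\}$, so its negation is that some $a\,\tau^4_p(X,X,\cdot,\cdot)+\tau^3_p(Y,\cdot,\cdot)$ with $a\in\mathbb{R}$ is positive definite on $K^{1,0}_{p,X}$. The case $a=0$ is handled by the previous claim, and $a>0$ rescales into the second alternative, but $a<0$ must be excluded; that amounts to ruling out that $\tau^4_p(X,X,\cdot,\cdot)+\tau^3_p(Y,\cdot,\cdot)$ is ever negative definite on $K^{1,0}_{p,X}$. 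You come close when you observe that the $t^2$-order term is even in $t$, but you do not convert this into the needed statement. The paper does it cleanly by applying Lemma~\ref{lem:counting_negative_eigenvalues} with the direction $-X$: since $\tau^3_p(-X,\cdot,\cdot)\le 0$ on $K^{1,0}_p$ with $K^{1,0}_{p,-X}=K^{1,0}_{p,X}$, one gets $n^-_{p,-X}=\dim K^{1,0}_p-\dim K^{1,0}_{p,X}$, and plugging into $n_p^-+n^-_{p,-X}+n^-_{p,-X,Y}\le q-1$ yields $n^-_{p,-X,Y}\le\dim K^{1,0}_{p,X}-1$, i.e.\ no $Y$ can make the $t^2$-term negative definite. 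Your scaling trick (replacing $Y_0$ by $\lambda Y_0$) is a nice and correct way to land in the second alternative once a strictly positive $\tau^3_p(Y_0,\cdot,\cdot)$ is found, but it does not supply the missing exclusion of the $a<0$ case nor the earlier indefiniteness on $K^{1,0}_{p,X}$.

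Your argument for the first assertion, indefiniteness of $\Span_Y\tau^3_p(Y,\cdot,\cdot)$ on $K^{1,0}_p$, is essentially the paper's (and can be shortened: the dimension count gives $\dim K^{1,0}_p=q-n_p^-$, so a negative definite $\tau^3_p(Y_0,\cdot,\cdot)$ on $K^{1,0}_p$ already violates $n_p^-+n^-_{p,Y_0}\le q-1$, and then $Y_0\mapsto -Y_0$ excludes positive definiteness).
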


\begin{proof}
  Since the dimension of $K^{1,0}_p$ is at least $1$, Lemma \ref{lem:counting_negative_eigenvalues} implies that $\tau^3_p(X,\cdot,\cdot)$ is not negative definite on $K^{1,0}_p$ for any $X\in T_p(\bd\Omega)$.  Since $\tau^3_p(X,\cdot,\cdot)=-\tau^3_p(-X,\cdot,\cdot)$, $\tau^3_p(X,\cdot,\cdot)$ is also never positive definite, and hence $\Span_{Y\in T_p(\bd\Omega)}\tau^3_p(Y,\cdot,\cdot)$ is indefinite on $K^{1,0}_p$.

If there exists $X\in T_p(\bd\Omega)$ such that $\tau^3_p(X,\cdot,\cdot)$ is positive semi-definite on $K^{1,0}_p\times K^{0,1}_p$, then
Lemma \ref{lem:semi_definite_direction} guarantees that $\Span_{Y\in T_p(\bd\Omega)}\tau^3_p(Y,\cdot,\cdot)$ is indefinite on $K^{1,0}_{p,X}$.
Since $\tau^3_p(-X,\cdot,\cdot)$ is negative semi-definite and $\tau^4_p(X,X,\cdot,\cdot)=\tau^4_p(-X,-X,\cdot,\cdot)$,
Lemma \ref{lem:counting_negative_eigenvalues} guarantees that $\tau^4_p(X,X,\cdot,\cdot)+\tau^3_p(Y,\cdot,\cdot)$
is not negative definite on $K^{1,0}_{p,X}\times K^{0,1}_{p,X}$ for any $Y\in T_p(\bd\Omega)$, leaving only the two possibilities admitted in the statement of the Corollary.
\end{proof}

\subsection{Proof of Theorem \ref{thm:main_theorem}}

\begin{proof}[Proof of Theorem \ref{thm:main_theorem}]
We continue using the notation of Section \ref{sec:results}.
Let $N_p$ be the $(q-n_p^-)\times(q-n_p^-)$ positive semi-definite matrix satisfying the hypotheses of Theorem \ref{thm:main_theorem}.
Extend $N_p$ to an $(n-1)\times(n-1)$ matrix by setting $N_p^{\bar k j}=0$ whenever $j\leq n_p^-$, $j\geq q+1$, $k\leq n_p^-$, or $k\geq q+1$.
Let $r=\rank(N_p)$.  By an orthonormal change of coordinates in $\{L_{n_p^-+1},\ldots, L_q\}$, we may assume that $N_p$ is diagonal and
$N_p^{\bar j j}=0$ whenever $n_p^-+r+1\leq j\leq q$.  Since $N_p$ is positive definite on its range,
there exists $\lambda>0$ such that for any $u\in\mathbb{C}^{n-1}$,
  \begin{equation}
  \label{eq:N_lower_bound}
    \sum_{j,k=1}^{n-1}\bar u_k N^{\bar k j}_p u_j\geq\lambda\sum_{j=n_p^-+1}^{n_p^-+r}|u_j|^2.
  \end{equation}

  Choose orthonormal coordinates so that $p=0$ and $L_j|_p=\frac{\partial}{\partial z_j}$, and write $z_j=x_j+i y_j$.  In these coordinates, we define a smooth family of $(q-n_p^-)\times(q-n_p^-)$ matrices by
  \[
    M^{\bar k j}=\sum_{\ell=1}^{n-1}x_\ell\tau_p^3\left(\frac{\partial}{\partial x_\ell},\frac{\partial}{\partial z_k},\frac{\partial}{\partial\bar z_j}\right)
    +\sum_{\ell=1}^{n}y_\ell\tau_p^3\left(\frac{\partial}{\partial y_\ell},\frac{\partial}{\partial z_k},\frac{\partial}{\partial\bar z_j}\right)
  \]
  when $n_p^-+1\leq j,k\leq q$.  Observe that $M$ has the property that $M^{\bar k j}|_p=0$ and $X M^{\bar k j}|_p=\tau_p^3(X,L_k,\bar L_j)$.

  For $t>0$, we define an extension $N_t$ of $N_p$ to a neighborhood of $p$ by setting
  \[
    N_t^{\bar k j}=\begin{cases}
      N_p^{\bar k j}+t M^{\bar k j},&n_p^-+1\leq j,k\leq q\text{ and either }j\leq n_p^-+r\text{ or }k\leq n_p^-+r,\\
      \sum_{\ell=n_p^-+r+1}^q \frac{2t^2}{\lambda}M^{\bar k\ell}M^{\bar\ell j},&n_p^-+r+1\leq j,k\leq q,\\
      0,&\text{otherwise}.
    \end{cases}
  \]
  Given $u\in\mathbb{C}^{n-1}$, we define $v\in\mathbb{C}^{n-1}$ by
  \[
    v_j=\begin{cases}
      \sum_{\ell=n_p^-+r+1}^q M^{\bar j\ell}u_\ell,&n_p^-+r+1\leq j\leq q,\\
      u_j,&\text{otherwise}.
    \end{cases}
  \]
  Using \eqref{eq:N_lower_bound},
  \begin{multline*}
    \sum_{j,k=1}^{n-1}\bar u_k N^{\bar k j}_tu_j\geq\lambda\sum_{j=n_p^-+1}^{n_p^-+r}|v_j|^2+\sum_{j,k=n_p^-+1}^{n_p^-+r} t\bar v_k M^{\bar k j}v_j\\
    +\sum_{k=n_p^-+r+1}^q\sum_{j=n_p^-+1}^{n_p^-+r}2t\re(v_k \bar v_j)+\sum_{j=n_p^-+r+1}^q 2\lambda^{-1}t^2|v_j|^2,
  \end{multline*}
  so the Cauchy-Schwarz inequality and the elementary inequality $2ab \leq \frac{\lambda}{2} a^2 + \frac{2}{\lambda}b^2$ for any real $a,b$ shows
  \[
    \sum_{j,k=1}^{n-1}\bar u_k N^{\bar k j}_tu_j
    \geq\frac{\lambda}{2}\sum_{j=n_p^-+1}^{n_p^-+r}|v_j|^2+\sum_{j,k=n_p^-+1}^{n_p^-+r} t\bar v_k M^{\bar k j}v_j.
  \]
Hence, $N_t$ is positive semi-definite on a neighborhood of $p$ that depends on $t$.  Furthermore, properties of $M$ give us
$N_t^{k\bar j}|_p=N_p^{k\bar j}$ and
\begin{equation}
\label{eq:X_of_N_t}
  X N_t^{k\bar j}|_p=t\tau^3_p(X,L_k,\bar L_j)
\end{equation}
whenever $X\in T_p(\partial\Omega)$, $n_p^-+1\leq j,k\leq q$, and either $j\leq n_p^-+r$ or $k\leq n_p^-+r$.

  Since the range of $N_p$ lies in the kernel of the Levi-form,
  \[
    \sum_{j,k=1}^{n-1}N_t^{\bar k j}\opL(L_j,\bar L_k)\Big|_p=0.
  \]
  For any $X\in T_p(\partial\Omega)$, \eqref{eq:tau_3_hypothesis} implies
  \begin{equation}\label{eqn:X Nt =0}
    X\Big(\sum_{j,k=1}^{n-1}N_t^{\bar k j}\opL(L_j,\bar L_k)\Big)\Big|_p=\sum_{j,k=n_p^-}^{q}N_p^{\bar k j}\tau^3_p(X,L_j,\bar L_k)=0.
  \end{equation}
  Furthermore,
  \begin{multline*}
    \Hess^b(X,X)\left(\sum_{j,k=1}^{n-1}N_{t}^{\bar k j}\opL(L_j,\bar L_k)\right)\Big|_p=\\
    \sum_{j,k=n_p^-+1}^{q}2\big(X N_{t}^{\bar k j}\big|_p\big)\tau^3_p(X,L_j,\bar L_k)
    +\sum_{j,k=n_p^-+1}^q N^{\bar k j}_p\Hess^b(X,X)\opL(L_j,\bar L_k),
  \end{multline*}
  so \eqref{eq:X_of_N_t} gives us
  \begin{multline*}
    \Hess^b(X,X)\left(\sum_{j,k=1}^{n-1}N_{t}^{\bar k j}\opL(L_j,\bar L_k)\right)\Big|_p=\\
    \sum_{\{n_p^-+1\leq j,k\leq q:j\leq n_p^-+r\text{ or }k\leq n_p^-+r\}}2t|\tau^3_p(X,L_j,\bar L_k)|^2
    +\sum_{j=n_p^-+1}^{n_p^-+r} N^{\bar j j}_p\Hess^b(X,X)\opL(L_j,\bar L_j).
  \end{multline*}

  Choose $\eps>0$ sufficiently small so that every eigenvalue of $\eps N_p$ is strictly less than $1$, and define an $(n-1)\times(n-1)$ matrix $\Upsilon$ by
  \[
    \Upsilon^{\bar k j}=\begin{cases}
      I_{jk},&1\leq j,k\leq n_p^-\\
      I_{jk}-\eps N^{\bar k j}_t,&n_p^-+1\leq j,k\leq q\\
      0,&\text{otherwise.}
    \end{cases}
  \]
  Note that by definition, $\sum_{j=1}^q\mu_j=\sum_{j=1}^q\mathcal{L}(L_j,\bar L_k)$.  Then
  \[
    \left(\sum_{j=1}^q\mu_j-\sum_{j,k=1}^{n-1}\Upsilon^{\bar k j}\opL(L_j,\bar L_k)\right)\bigg|_p=0,
  \]
  and for any $X\in T_p(\partial\Omega)$ we have, as a consequence of \eqref{eqn:X Nt =0}, that
  \[
    X\left(\sum_{j=1}^q\mu_j-\sum_{j,k=1}^{n-1}\Upsilon^{\bar k j}\opL(L_j,\bar L_k)\right)\bigg|_p=0.
  \]
  Moreover,
  \begin{multline}
  \label{eq:upsilon_hessian}
    \Hess^b(X,X)\left(\sum_{j=1}^q\mu_j-\sum_{j,k=1}^{n-1}\Upsilon^{\bar k j}\opL(L_j,\bar L_k)\right)\bigg|_p=\\
    \sum_{\{n_p^-+1\leq j,k\leq q:j\leq n_p^-+r\text{ or }k\leq n_p^-+r\}}2t\eps|\tau^3_p(X,L_j,\bar L_k)|^2
    +\sum_{j=n_p^-+1}^{n_p^-+r} \eps N^{\bar j j}_p\Hess^b(X,X)\opL(L_j,\bar L_j).
  \end{multline}
  Let $S\subset T_p(\partial\Omega)$ denote the set of unit-length tangent vectors.  If we can show
  \begin{equation}
  \label{eq:upsilon_positivity}
    \Hess^b(X,X)\left(\sum_{j=1}^q\mu_j-\sum_{j,k=1}^{n-1}\Upsilon^{\bar k j}\opL(L_j,\bar L_k)\right)\bigg|_p>0
  \end{equation}
  for all $X\in S$, then it will follow that \eqref{eq:upsilon_positivity} holds for all nontrivial $X\in T_p(\partial\Omega)$, and hence
  \[
    \sum_{j=1}^q\mu_j-\sum_{j,k=1}^{n-1}\Upsilon^{\bar k j}\opL(L_j,\bar L_k)>0
  \]
  on some neighborhood of $p$ (excluding $p$ itself).  This suffices to show that weak $Z(q)$ is satisfied in a neighborhood of $p$, and since weak $Z(q)$ is a local condition (on bounded domains), we conclude that weak $Z(q)$ is satisfied on $\Omega$.

Let $S_0\subset S$ denote the set of $X\in T_p(\partial\Omega)$ for which $\tau^3_p(X,L_j,\bar L_k)=0$ for all $n_p^-+1\leq j,k\leq q$ such that either
$j\leq n_p^-+r$ or $k\leq n_p^-+r$.  Then $L_j\in K^{1,0}_{p,X}$ for all $n_p^-+1\leq j\leq n_p^-+r$.
This means that \eqref{eq:kernel_hypothesis} is satisfied, so by \eqref{eq:tau_4_hypothesis}, we know that for any $X\in S_0$ we have
  \begin{equation}
  \label{eq:positive_hessian}
    \sum_{j=n_p^-+1}^{n_p^-+r} N^{\bar j j}_p\Hess^b(X,X)\opL(L_j,\bar L_j)=\sum_{j=n_p^-+1}^{n_p^-+r} N^{\bar j j}_p\tau^4(X,X,L_j,\bar L_j)>0.
  \end{equation}

  Let $\mathcal{O}$ denote an open neighborhood of $S_0$ in $S$ on which \eqref{eq:positive_hessian} holds.  On $S\backslash S_0$,
  \[
    \sum_{\{n_p^-+1\leq j,k\leq q:j\leq n_p^-+r\text{ or }k\leq n_p^-+r\}}|\tau^3_p(X,L_j,\bar L_k)|^2>0,
  \]
  so on $S\backslash\mathcal{O}$ this must have a uniform lower bound of $\kappa>0$.  On $\mathcal{O}$, \eqref{eq:upsilon_hessian} and \eqref{eq:positive_hessian} give us \eqref{eq:upsilon_positivity},
  while on $S\backslash\mathcal{O}$, \eqref{eq:upsilon_hessian} gives us
  \begin{multline}
    \Hess^b(X,X)\left(\sum_{j=1}^q\mu_j-\sum_{j,k=1}^{n-1}\Upsilon^{\bar k j}\opL(L_j,\bar L_k)\right)\bigg|_p\geq\\
    2t\eps\kappa+\sum_{j=n_p^-+1}^{n_p^-+r} N^{\bar j j}_p\Hess^b(X,X)\opL(L_j,\bar L_j),
  \end{multline}
  so we may choose
  \[
    t>-\frac{1}{2\eps\kappa}\inf_{X\in S\backslash\mathcal{O}}\sum_{j=n_p^-+1}^{n_p^-+r} N^{\bar j j}_p\Hess^b(X,X)\opL(L_j,\bar L_j)
  \]
  to obtain \eqref{eq:upsilon_positivity} on $S\backslash\mathcal{O}$.

  We have now shown that $\Omega$ satisfies weak $Z(q)$, and the conclusions of the theorem will follow from Theorem 1.1 in \cite{HaRa15}.
\end{proof}

%
%
\section{Indefinite Families of Matrices}
\label{sec:indefinite_subspaces}

Motivated by Theorems \ref{thm:isolated degeneracy} and \ref{thm:isolated degeneracy_improved}, we will develop the theory of indefinite matrices.  First, we will show that we may replace positive definite with positive semi-definite in the definition of indefinite families by restricting to an appropriate subspace.

\begin{lem}
\label{lem:semi_definite_direction}
  Let $\{M_j\}$ be a collection of hermitian $n\times n$ matrices such that $\Span_{\mathbb{R}}\{M_j\}$ is indefinite on some nontrivial subspace $V\subset\mathbb{C}^n$.  If $\sum_j a^j M_j$ is positive semi-definite when restricted to $V$ for some nontrivial collection of real numbers $\{a^j\}$, then $\Span_{\mathbb{R}}\{M_j\}$ is indefinite on the kernel of $\sum_j a^j M_j$ in $V$.
\end{lem}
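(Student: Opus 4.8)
The plan is to argue by contradiction via a small perturbation. Set $A = \sum_j a^j M_j$, which is positive semi-definite on $V$ by hypothesis, and let $W \subset V$ be its kernel in $V$; concretely, if $P_V$ is the $n \times m$ matrix whose columns form an orthonormal basis of $V$, then $W$ is the preimage under $P_V$ of $\ker(\bar P_V^T A P_V)$, which — because $\bar P_V^T A P_V$ is positive semi-definite — is exactly the set of $v \in V$ on which the quadratic form of $A$ vanishes. Suppose, contrary to the conclusion, that $\Span_{\mathbb{R}}\{M_j\}$ is not indefinite on $W$; then there are real numbers $\{b^j\}$ so that $B = \sum_j b^j M_j$ is positive definite when restricted to $W$. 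I claim that $A + sB$ is positive definite on $V$ for all sufficiently small $s > 0$. Since $A + sB = \sum_j (a^j + sb^j) M_j$ lies in $\Span_{\mathbb{R}}\{M_j\}$, this contradicts indefiniteness on $V$ and proves the lemma.

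To prove the claim I would first pass to the restrictions $\bar P_V^T A P_V$ and $\bar P_V^T B P_V$, so that we may assume $V = \mathbb{C}^m$, $A$ is positive semi-definite with kernel $W$, and $B$ is positive definite on $W$. Split each $v \in V$ orthogonally as $v = w + w'$ with $w \in W$ and $w'$ in the orthogonal complement $W^\perp$ of $W$ in $V$ (which is $A$-invariant, being spanned by the positive eigenvectors of $A$). Because $Aw = 0$ and $A$ is hermitian, the $A$-cross terms drop out and $\bar v^T A v = \bar{w'}^T A w' \geq \lambda_0 |w'|^2$, where $\lambda_0 > 0$ is the smallest positive eigenvalue of $A$. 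For $B$ we have $\bar w^T B w \geq \beta |w|^2$ with $\beta > 0$, while the remaining two terms are controlled crudely by $|B|$ together with a Young inequality of the shape $2|B|\,|w|\,|w'| \leq \tfrac{\beta}{2}|w|^2 + C|w'|^2$, with $C$ depending only on $\beta$ and $|B|$; this gives $\bar v^T B v \geq \tfrac{\beta}{2}|w|^2 - C|w'|^2$. Combining the two estimates, $\bar v^T (A + sB) v \geq \tfrac{s\beta}{2}|w|^2 + (\lambda_0 - sC)|w'|^2$, which is strictly positive whenever $v \neq 0$ and $0 < s < \lambda_0/C$. This establishes the claim.

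I do not anticipate a genuine obstacle here — the argument is a soft perturbation estimate — but the one point requiring care is the identification of $W$: the $A$-cross terms vanish precisely because $W$ is the \emph{null space of the restricted quadratic form} (and not merely $\ker A \cap V$), which is exactly where positive semi-definiteness of $A$ on $V$ enters. One should also record that the two degenerate cases are immediate: if $W = \{0\}$ then $A$ is already positive definite on $V$, directly contradicting the hypothesis, so this cannot occur; and if $W = V$ the assertion is literally the hypothesis. Beyond this, the only remaining content is verifying that $\lambda_0, \beta, C$ are strictly positive and independent of $v$, which is routine.
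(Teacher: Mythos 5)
Your proof is correct and takes essentially the same approach as the paper: both arguments restrict to $V$, decompose $v$ into its components in the kernel and its orthogonal complement, observe that the $A$-cross terms vanish, control the $B$-cross terms via a Young-type inequality, and choose the perturbation parameter small to make $A+sB$ positive definite on $V$, contradicting indefiniteness. Your explicit remark distinguishing the kernel of the restricted quadratic form from $\ker A\cap V$ is a useful clarification that the paper leaves implicit, and the degenerate cases you note are likewise implicit in the paper's restriction $1\leq k\leq\ell-1$.
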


\begin{proof}
  Let $1\leq\ell\leq n$ denote the dimension of $V$, and let $P_V$ denote an $n\times\ell$ matrix such that the columns form an orthonormal basis for $V$. Our hypotheses are that $\Span_{\mathbb{R}}\{\bar P_V^T M_j P_V\}$ is indefinite on $\C^\ell$
and the $\ell\times\ell$ matrix $\sum_j a^j\bar P_V^T M_j P_V$ is positive semi-definite.
Let $1\leq k\leq \ell-1$ denote the dimension of the kernel of
$\sum_j a^j\bar P_V^T M_j P_V$, and let $P_k$ denote an $\ell\times k$ matrix such that the columns form an orthonormal basis for the kernel of
$\sum_j a^j \bar P_V^T M_j P_V$.
If the $k\times k$ matrix $\sum_j b^j (\bar P_V\bar P_k)^T M_j P_V P_k$ is positive definite for some collection of real numbers $\{b^j\}$, then there exists some $\eps>0$
such that $\sum_j (a^j+\eps b^j)\bar P_V^T M_j P_V$ is also positive definite.  To see this, suppose that $\lambda_1>0$ is the smallest positive eigenvalue of $\sum_j a^j\bar P_V^T M_j P_V$, $\lambda_2>0$ is the smallest eigenvalue of $\sum_j b^j (\bar P_V\bar  P_k)^T M_j P_V P_k$, and $\Lambda$ is the magnitude of the largest eigenvalue of $\sum_j b^j\bar P_V^T M_j P_V$ (in absolute value).  For any nontrivial vector $v\in\mathbb{C}^\ell$, let $v_1=v-P_k\bar P_k^T v$ and $v_2=P_k\bar P_k^T v$.  Then
  \begin{multline*}
    \bar{v}^T\left(\sum_j (a_j+\eps b_j)\bar P_V^T M_j P_V\right)v\geq\\
     \lambda_1\abs{v_1}^2+\eps\lambda_2\abs{v_2}^2+2\eps\re\left(\bar{v}_1^T\left(\sum_j b_j\bar P_V^T M_j P_V\right)v_2\right)-\eps
     \Lambda\abs{v_1}^2\geq\\
     \frac{\lambda_1}{2}\abs{v_1}^2+\eps\lambda_2\abs{v_2}^2-\frac{2\Lambda^2}{\lambda_1}\eps^2\abs{v_2}^2-\eps
     \Lambda\abs{v_1}^2,
  \end{multline*}
  and this will be strictly positive for $0<\eps<\min\set{\frac{\lambda_1}{2\Lambda},\frac{\lambda_1\lambda_2}{2\Lambda^2}}$.  Since $\sum_j (a_j+\eps b_j)\bar P_V^T M_j P_V$ is never positive definite by assumption, we conclude that $\Span_{\mathbb{R}}\{(\bar P_V\bar P_k)^T M_j P_V P_k\}$ is indefinite.
\end{proof}

As a trivial corollary, we obtain our first property of minimal subspaces.
\begin{cor}
\label{cor:minimal}
  Let $\{M_j\}$ be a collection of hermitian $n\times n$ matrices.  If a nontrivial subspace $V\subset\mathbb{C}^n$ is minimal with respect to $\Span_{\mathbb{R}}\{M_j\}$, then no real linear combination of $\{M_j\}$ is positive semi-definite on $V$ unless it is equal to the zero matrix.
\end{cor}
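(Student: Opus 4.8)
The plan is to argue by contradiction, feeding a hypothetical positive semi-definite combination into Lemma \ref{lem:semi_definite_direction} to manufacture a forbidden proper subspace. So suppose $V$ is minimal with respect to $\Span_{\mathbb{R}}\{M_j\}$, and suppose, contrary to the claim, that $M=\sum_j a^j M_j$ is positive semi-definite when restricted to $V$ for some real numbers $\{a^j\}$ but does not restrict to the zero matrix on $V$.

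First I would rule out $M$ being positive definite on $V$: that would contradict the fact, built into the definition of minimality, that $\Span_{\mathbb{R}}\{M_j\}$ is indefinite on $V$. Hence $M|_V$ is positive semi-definite but not positive definite, so the kernel $W$ of the restricted form $M|_V$ is a nontrivial subspace of $V$; and since $M|_V$ does not vanish, $W$ is a \emph{proper} subspace of $V$. Now I would apply Lemma \ref{lem:semi_definite_direction} with the coefficients $\{a^j\}$: it yields that $\Span_{\mathbb{R}}\{M_j\}$ is indefinite on $W$. But a nontrivial proper subspace of $V$ on which $\Span_{\mathbb{R}}\{M_j\}$ is indefinite is exactly what minimality of $V$ forbids, so we have our contradiction, and the corollary follows.

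There is essentially no obstacle here beyond bookkeeping. The one point requiring a moment's care is verifying that $W$ is simultaneously nontrivial (forced because $M|_V$ is positive semi-definite but, by indefiniteness on $V$, not positive definite) and proper (forced by the assumption $M|_V\neq 0$), so that Lemma \ref{lem:semi_definite_direction} genuinely applies and the subspace it produces actually contradicts minimality. It is also worth stating at the outset that ``positive semi-definite on $V$'' and ``equal to the zero matrix'' are to be read as statements about the restricted quadratic form $\bar P_V^T M P_V$, in keeping with the conventions fixed just before Definition \ref{defn:indefinite}; with that reading the result is immediate from the lemma.
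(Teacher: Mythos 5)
Your proof is correct and is precisely the argument the paper intends when it labels this "a trivial corollary" of Lemma~\ref{lem:semi_definite_direction}: rule out positive definiteness by indefiniteness on $V$, observe that the kernel of $M|_V$ is then a nontrivial proper subspace of $V$, and apply the lemma to contradict minimality. Your closing remark about reading ``equal to the zero matrix'' as a statement about $\bar P_V^T M P_V$ is a worthwhile clarification but does not constitute a departure from the paper's approach.
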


With these tools in place, we are ready to prove our main characterization of indefinite collections of matrices: they are orthogonal to at least one positive semi-definite matrix.  To prove this, it will be helpful to note that if $A=(a_{j\bar k})_{1\leq j,k\leq n}$, then $|A|=\sqrt{\sum_{j,k=1}^n|a_{j\bar k}|^2}$, so
\[
  \sup_{1\leq j,k\leq n}|a_{j\bar k}|\leq|A|\leq n\sup_{1\leq j,k\leq n}|a_{j\bar k}|.
\]
This norm equivalence will be critical in what follows, since it implies that a sequence $\{A_j\}$ converges to $A$ in norm if and only if every element of $\{A_j\}$ converges to the corresponding element of $A$.  In particular, if $A_t$ is a family of hermitian matrices parameterized by $t\in\mathbb{R}$, then $A_t$ is differentiable if and only if every element of $A_t$ is differentiable.

\begin{lem}
\label{lem:orthogonal positive_matrix}
  Let $\{M_j\}$ be a collection of hermitian $n\times n$ matrices and let $V\subset\mathbb{C}^n$ be a nontrivial vector space.  There exists a nontrivial positive semi-definite hermitian $n\times n$ matrix $N$ such that the range of $N$ lies in $V$ and $\left<M_j,N\right>=0$ for all $j$ if and only if $\Span_{\mathbb{R}}\{M_j\}$ is indefinite on $V$.
\end{lem}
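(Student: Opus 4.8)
The plan is to prove both directions separately, with the ``only if'' direction (existence of $N$ implies indefiniteness) being the easy one and the ``if'' direction (indefiniteness produces $N$) requiring real work. For the easy direction, suppose such an $N$ exists: it is positive semi-definite, nontrivial, with range in $V$, and $\left<M_j,N\right>=\Tr(M_j N)=0$ for all $j$. If some real combination $M=\sum_j a^j M_j$ were positive definite on $V$, then since the range of $N$ lies in $V$ and $N\neq 0$, we could write $N=\sum_i \lambda_i v_i \bar v_i^T$ with $\lambda_i>0$ and $v_i\in V$, whence $\Tr(MN)=\sum_i \lambda_i \bar v_i^T M v_i>0$, contradicting $\left<M,N\right>=\sum_j a^j\left<M_j,N\right>=0$. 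Hence no real combination is positive definite on $V$, i.e.\ $\Span_{\mathbb{R}}\{M_j\}$ is indefinite on $V$.

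For the ``if'' direction I would argue by induction on $\dim V$, using the minimal subspace machinery. First reduce to the case where $V$ itself is minimal with respect to $\Span_{\mathbb{R}}\{M_j\}$: since $\Span_{\mathbb{R}}\{M_j\}$ is indefinite on $V$, there is a minimal subspace $V'\subseteq V$, and an $N$ with range in $V'\subseteq V$ orthogonal to all $M_j$ serves for $V$ as well. So assume $V$ is minimal. By Corollary \ref{cor:minimal}, no nonzero real combination of $\{M_j\}$ is positive semi-definite on $V$. Passing to $\bar P_V^T M_j P_V$ we may as well assume $V=\mathbb{C}^n$ and that no nonzero real combination of the $M_j$ is positive semi-definite. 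Now consider the convex compact set $\mathcal{C}$ of positive semi-definite hermitian matrices $N$ with $\Tr(N)=1$, and the linear map $\Phi\colon N\mapsto(\left<M_j,N\right>)_j$ into $\mathbb{R}^d$ where $d$ is the number of matrices (or use a basis of $\Span_{\mathbb{R}}\{M_j\}$). I want to show $0\in\Phi(\mathcal{C})$. If not, by the separating hyperplane theorem there is a linear functional, i.e.\ real numbers $a^j$, with $\sum_j a^j\left<M_j,N\right>=\left<\sum_j a^j M_j,N\right>>0$ for all $N\in\mathcal{C}$; taking $N=v\bar v^T/|v|^2$ shows $\sum_j a^j M_j$ is positive definite, contradicting indefiniteness. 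Hence there is $N\in\mathcal{C}$ with $\left<M_j,N\right>=0$ for all $j$; this $N$ is nontrivial and positive semi-definite. Its range automatically lies in $\mathbb{C}^n=V$; after transporting back via $P_V$, the range lies in the original $V$.

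The main obstacle is making sure the range condition survives the reduction to a minimal subspace and the $P_V$ conjugation cleanly, and—more substantively—handling the possibility that the minimal subspace $V'$ is a proper subspace: the separating-hyperplane argument on $\bar P_{V'}^T M_j P_{V'}$ only gives a matrix positive definite on $V'$, not on all of $\mathbb{C}^n$, which is exactly why we needed the reduction to a minimal subspace (so that ``indefinite on $V'$'' together with Corollary \ref{cor:minimal} gives ``no nonzero combination is positive semi-definite on $V'$,'' ruling out the boundary case where $\Phi(\mathcal{C})$ touches $0$ only through a semi-definite but not definite witness). I should double-check that the separating functional argument does not require $\Phi(\mathcal C)$ to be closed and disjoint from $0$ in the strong sense—$\mathcal C$ is compact so $\Phi(\mathcal C)$ is compact, hence the strict separation theorem applies directly if $0\notin\Phi(\mathcal C)$. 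An alternative to the hyperplane argument, which may be cleaner to write, is to take a nonzero $N_0\in\mathcal{C}$ minimizing $\sum_j\left<M_j,N\right>^2$ over $\mathcal{C}$ and show the minimum is $0$ by a first-order variation argument: if the minimum value were positive, perturbing $N_0$ toward a rank-one matrix $v\bar v^T$ in a suitable direction would decrease it unless $\sum_j\left<M_j,N_0\right>M_j$ is positive semi-definite on $\mathbb{C}^n$ (or on $V$), again contradicting Corollary \ref{cor:minimal}; I would pick whichever of these two routes produces the shorter exposition.
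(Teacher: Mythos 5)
Your proof is correct, and the ``if'' direction takes a genuinely different route from the paper. The forward direction is essentially the paper's argument (trace pairing of a nonzero PSD $N$ with range in $V$ against a putative positive-definite combination), so nothing to say there. For the converse, the paper constructs $N$ by a hands-on variational argument: it takes the unit-length PSD matrix $N$ maximizing the distance to $\Span_{\mathbb{R}}\{M_j\}$, lets $\tilde{M}$ be the orthogonal projection of $N$ onto that span, perturbs $N$ along geodesics in the PSD cone to derive first-order optimality conditions, and ultimately shows $\tilde{M}$ is PSD and hence (after the minimality reduction) zero. You replace all of this with a clean separating-hyperplane argument on the compact convex spectrahedron $\mathcal{C}=\{N\succeq 0:\Tr N=1\}$: if $0\notin\Phi(\mathcal{C})$, strict separation yields a real combination $\sum_j a^j\bar P_V^T M_j P_V$ that is uniformly positive on rank-one matrices $v\bar v^T/|v|^2$, hence positive definite, contradicting indefiniteness on $V$. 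This is the standard convex-duality route and is arguably shorter and more transparent than the paper's. One remark: your preliminary reduction to a minimal subspace via Corollary \ref{cor:minimal} is harmless but unnecessary. The separation argument works directly on $V$, since ``indefinite on $V$'' already means no real combination of $\bar P_V^T M_j P_V$ is positive definite, which is precisely what the separating functional would produce. The ``boundary case'' you worry about does not arise: you only need $0\in\Phi(\mathcal{C})$, and compactness of $\Phi(\mathcal{C})$ (which you correctly note) is what makes the dichotomy clean. So you may safely delete the minimality reduction and the surrounding discussion; the paper, by contrast, genuinely needs the reduction because its variational argument produces a $\tilde{M}$ that is only PSD, and the minimality (via Corollary \ref{cor:minimal}) is what upgrades ``PSD'' to ``zero.''
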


\begin{proof}
  We begin with the assumption that $N$ exists.  Let $1\leq\ell\leq n$ be the dimension of $V$ and let $P_V$ be an $n\times\ell$ matrix whose columns form an orthonormal basis for $V$.  We may choose $P_V$ to diagonalize $\bar P_V^T N P_V$.  By assumption, $P_V\bar P_V^T N=N$, so
  \[
    N=P_V\bar P_V^T N P_V\bar P_V^T.
  \]
  Since $N$ is nontrivial and positive semi-definite, at least one eigenvalue of $\bar P_V^T N P_V$ must be positive; suppose it is the first eigenvalue, and denote it $\lambda_1>0$.  Suppose $\sum_j a^j \bar P_V^T M_j P_V$ is positive definite for some collection of real numbers $\{a^j\}$.  Then
  \[
    0=\sum_{j=1}^n a^j\left<M_j,N\right>=\Tr\left(\sum_{j=1}^n a^j(\bar P_V^T M_j P_V)(\bar P_V^T N P_V)\right)\geq\lambda_1\sum_{j=1}^n a^j(\bar P_V^T M_j P_V)_{11}>0,
  \]
  a contradiction.  Hence, $\Span_{\mathbb{R}}\{M_j\}$ is indefinite on $V$.

  Conversely, suppose that $\Span_{\mathbb{R}}\{M_j\}$ is indefinite on $V$.  Let $\tilde{V}\subset V$ be a subspace that is minimal with respect to
$\Span_{\mathbb{R}}\{M_j\}$.  By Corollary \ref{cor:minimal}, no real linear combination of $\{M_j\}$ is positive semi-definite on $\tilde V$ unless it is equal to the zero
matrix.  Let $1\leq\ell\leq n$ denote the dimension of $\tilde V$, and let $P_{\tilde V}$ denote an $n\times\ell$ matrix with columns forming an orthonormal basis
for $\tilde V$.  If there exists a nontrivial positive semi-definite $\ell\times\ell$ matrix $\tilde N$ such that
$\left<\bar P_{\tilde V}^T M_j P_{\tilde V},\tilde N\right>=0$ for every $j$, then $N=P_{\tilde V}\tilde N\bar P_{\tilde V}^T$ will satisfy the conclusion of the lemma.  Hence, it will suffice to prove the result after restricting to $\tilde V$.

  For ease of notation, we assume that we have already restricted to the subspace described in the previous paragraph, and that no linear combination of $\{M_j\}$ is positive semi-definite unless it is equal to the zero matrix.  Without loss of generality, we may assume that $\{M_j\}$ is linearly independent, so that no nontrivial real linear combination of $\{M_j\}$ is positive semi-definite.  Let $S$ denote the set of all unit-length positive semi-definite hermitian $n\times n$ matrices, and note that $\left<N_1,N_2\right>\geq 0$ for any $N_1,N_2\in S$, so $0\leq\arccos\left<N_1,N_2\right>\leq\frac{\pi}{2}$.  Since $S$ is a compact set that does not intersect $\Span_{\mathbb{R}}\{M_j\}$ by assumption, we can set $N$ equal to an element of $S$ maximizing the distance from $S$ to
$\Span_{\mathbb{R}}\{M_j\}$.  Let $\tilde{M}$ denote the unique element of $\Span_{\mathbb{R}}\{M_j\}$ satisfying
$\dist(\tilde{M},N)=\dist(\Span_{\mathbb{R}}\{M_j\},N)$.  Our goal is to show that $\tilde{M}=0$.  Note that since $\tilde{M}$ must represent the orthogonal projection of $N$ onto $\Span_{\mathbb{R}}\{M_j\}$, we have $\left<\tilde{M}-N,M\right>=0$ for all $M\in\Span_{\mathbb{R}}\{M_j\}$.  On the other hand, suppose that $N'\in S$ is linearly independent from $N$, and set $N_t=\cos t N+\sin t\frac{N'-\left<N',N\right>N}{\sqrt{1-\left<N,N'\right>^2}}$.  For $0\leq t\leq \arccos\left<N,N'\right>$, we have $N_t\in S$ as well.  Let $\tilde{M}'\in\Span_{\mathbb{R}}\{M_j\}$ minimize the distance to $N'$, so that $\left<\tilde{M}'-N',M\right>=0$ for all $M\in\Span_{\mathbb{R}}\{M_j\}$.  Since $\tilde{M}_t=\cos t \tilde{M}+\sin t\frac{\tilde{M}'-\left<N',N\right>\tilde{M}}{\sqrt{1-\left<N,N'\right>^2}}$ is an element of $\Span_{\mathbb{R}}\{M_j\}$ satisfying $\left<\tilde{M}_t-N_t,M\right>=0$ for all $M\in\Span_{\mathbb{R}}\{M_j\}$, it must minimize the distance to $N_t$.  By assumption, $\abs{\tilde{M}-N}\geq\abs{\tilde{M}_t-N_t}$ for all $0\leq t\leq 1$.  Using $\left<\tilde{M}-N,\tilde{M}'\right>=0$, we compute
  \[
    \abs{\tilde{M}_t-N_t}^2=\cos^2 t\abs{\tilde{M}-N}^2-2\sin t\cos t\frac{\left<\tilde{M}-N,N'\right>+\left<N',N\right>\abs{\tilde{M}-N}^2}{\sqrt{1-\left<N,N'\right>^2}}+O(\sin^2 t),
  \]
  so
  \[
    \abs{\tilde{M}-N}^2-\abs{\tilde{M}_t-N_t}^2=2\sin t\cos t\frac{\left<\tilde{M}-N,N'\right>+\left<N',N\right>\abs{\tilde{M}-N}^2}{\sqrt{1-\left<N,N'\right>^2}}+O(\sin^2 t).
  \]
  Since this quantity is nonnegative, we may divide by $0<t\leq \arccos\left<N,N'\right>$ and let $t\rightarrow 0^+$ to conclude $\left<\tilde{M}-N,N'\right>+\left<N',N\right>\abs{\tilde{M}-N}^2\geq 0$.  Since $\left<\tilde{M}-N,\tilde{M}\right>=0$ implies $\abs{\tilde{M}-N}^2=-\left<\tilde{M}-N,N\right>$, we can simplify our conclusion to obtain
  \begin{equation}
    \label{eq:N_maximizer}
    \left<\tilde{M}-\left<\tilde{M},N\right>N,N'\right>\geq 0
  \end{equation}
  for any $N'\in S$ (observe that this is trivially true if $N'$ and $N$ are linearly dependent).

  Let $v$ be any unit length eigenvector of $N$ with eigenvalue $\lambda>0$ and let $u$ be any vector.  Then
  \[
    N+tu\bar{v}^T+tv\bar{u}^T+t^2\lambda^{-1}u\bar{u}^T=N-\lambda v\bar{v}^T+\lambda(v+\lambda^{-1}tu)(\bar{v}^T+\lambda^{-1}t\bar{u}^T)
  \]
  is positive semi-definite for any $t\in\mathbb{R}$ with norm
  \[
    \abs{N+tu\bar{v}^T+tv\bar{u}^T+t^2\lambda^{-1}u\bar{u}^T}
    =\sqrt{1+4t\lambda\re(\bar{u}^T v)+O(t^2)}
  \]
  so we can obtain an element of $S$ via
  \begin{multline*}
    A_t=\frac{N+tu\bar{v}^T+tv\bar{u}^T+t^2\lambda^{-1}u\bar{u}^T}{\abs{N+tu\bar{v}^T+tv\bar{u}^T+t^2\lambda^{-1}u\bar{u}^T}}\\
    =\frac{N}{\abs{N+tu\bar{v}^T+tv\bar{u}^T+t^2\lambda^{-1}u\bar{u}^T}}+(1-2t\lambda\re(\bar{u}^T v))t(u\bar{v}^T+v\bar{u}^T)
    +t^2\lambda^{-1}u\bar{u}^T+O(t^3).
  \end{multline*}
  Since \eqref{eq:N_maximizer} only depends on the component of $N'$ that is orthogonal to $N$, we can substitute $A_t$ in place of $N'$ to obtain
  \[
    \left<\tilde{M}-\left<\tilde{M},N\right>N,(1-2t\lambda\re(\bar{u}^T v))t(u\bar{v}^T+v\bar{u}^T)+t^2\lambda^{-1}u\bar{u}^T\right>\geq -O(t^3)
  \]
  Since this must hold for all $t>0$ and $t<0$, the terms of order $t$ must vanish, and hence $\left<\tilde{M}-\left<\tilde{M},N\right>N,(u\bar{v}^T+v\bar{u}^T)\right>=0$.  We are left with
  \[
    t^2\lambda^{-1}\bar{u}^T\left(\tilde{M}-\left<\tilde{M},N\right>N\right)u\geq -O(t^3).
  \]
  Since this holds for all $t\in\mathbb{R}$ and all vectors $u$, $\tilde{M}-\left<\tilde{M},N\right>N$ must be positive semi-definite.  Since $\left<\tilde{M},N\right>=|\tilde{M}|^2$, $\tilde{M}$ must be positive semi-definite.  By assumption, this is only possible if $\tilde{M}=0$.  Since the origin is the closest point in $\Span_{\mathbb{R}}\{M_j\}$ to $N$, $N$ must be orthogonal to $\Span_{\mathbb{R}}\{M_j\}$, and our conclusion follows.

\end{proof}

In general, the matrix $N$ obtained in Lemma \ref{lem:orthogonal positive_matrix} may not be unique.  As a first step to understanding what it means for $N$ to be unique, we consider one consequence of nonuniqueness:

\begin{lem}
\label{lem:nonunique_minimal_spaces}
  Let $\{M_j\}$ be a collection of hermitian $n\times n$ matrices.  Suppose there exist two linearly independent positive semi-definite matrices $N_1$ and $N_2$ with the property that $\left<M_j,N_1\right>=\left<M_j,N_2\right>=0$ for all $j$.  Then there exist two nontrivial subspaces $V_1,V_2\subset\Span_{\mathbb{C}}\{\Ran(N_1),\Ran(N_2)\}$ such that $V_1\cap V_2=\emptyset$ and each subspace is minimal with respect to $\Span_{\mathbb{R}}\{M_j\}$.
\end{lem}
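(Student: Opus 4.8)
The plan is to phrase the non-uniqueness of the orthogonal positive semi-definite matrix as a statement about extreme rays of a convex cone and then feed those extreme rays back into Lemma~\ref{lem:orthogonal positive_matrix}. Put $W=\Span_{\mathbb C}\{\Ran(N_1),\Ran(N_2)\}$ and let $C_W$ denote the set of hermitian positive semi-definite $n\times n$ matrices $N$ with $\Ran N\subseteq W$ and $\langle M_j,N\rangle=0$ for all $j$. This is the intersection of the positive semi-definite cone with two linear subspaces, hence a closed convex cone, and it is pointed because it lies inside the positive semi-definite cone. Since $N_1,N_2\in C_W$ are linearly independent, $\dim C_W\ge 2$; a pointed closed convex cone of dimension at least $2$ in finite dimensions is the conical hull of its extreme rays and therefore has at least two distinct extreme rays, with generators I will call $N_{E_1}$ and $N_{E_2}$.

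The heart of the argument is the claim that the range of any extreme-ray generator $N_E$ of $C_W$ is minimal with respect to $\Span_{\mathbb R}\{M_j\}$. Indefiniteness of $\Span_{\mathbb R}\{M_j\}$ on $\Ran N_E$ is immediate from Lemma~\ref{lem:orthogonal positive_matrix} applied with the matrix $N_E$ itself. If $\Span_{\mathbb R}\{M_j\}$ were also indefinite on some nontrivial proper subspace $V\subsetneq\Ran N_E$, Lemma~\ref{lem:orthogonal positive_matrix} would produce a nonzero positive semi-definite $\tilde N$ with $\Ran\tilde N\subseteq V$ and $\langle M_j,\tilde N\rangle=0$, so $\tilde N\in C_W$. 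Since $\Ran\tilde N\subseteq\Ran N_E$, the matrix $\tilde N$ vanishes on $(\Ran N_E)^\perp$; as $N_E$ is positive definite on $\Ran N_E$, for all sufficiently small $\eps>0$ the matrix $N_E-\eps\tilde N$ is still positive semi-definite, has range inside $W$, and is orthogonal to every $M_j$, so $N_E-\eps\tilde N\in C_W$. Then $N_E=(N_E-\eps\tilde N)+\eps\tilde N$ exhibits $N_E$ as a sum of two elements of $C_W$, and $\eps\tilde N$ is not a nonnegative multiple of $N_E$ because $\Ran\tilde N\subseteq V\subsetneq\Ran N_E$; this contradicts extremality of the ray. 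Hence $\Ran N_E$ is minimal.

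Applying this to $E_1$ and $E_2$, the subspaces $V_1:=\Ran N_{E_1}$ and $V_2:=\Ran N_{E_2}$ are minimal and contained in $W$, and it remains to see that they are different. If instead $\Ran N_{E_1}=\Ran N_{E_2}=:V$, then both matrices are positive definite on $V$ and zero on $V^\perp$; since $N_{E_1}$ and $N_{E_2}$ are not proportional and $N_{E_2}$ is positive definite on $V$, the pencil $N_{E_1}-tN_{E_2}$ is positive semi-definite for $t$ in a maximal interval $[0,t^*]$ with $0<t^*<\infty$, and $N_{E_1}-t^*N_{E_2}$ is a nonzero element of $C_W$ whose range is a proper subspace of $V$. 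Then $N_{E_1}=(N_{E_1}-t^*N_{E_2})+t^*N_{E_2}$ contradicts extremality of $E_1$ exactly as before, so $V_1\ne V_2$. Since two subspaces always meet at the origin, this distinctness is what the conclusion $V_1\cap V_2=\emptyset$ must assert, and the lemma follows.

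The step I expect to be the real obstacle is the minimality claim for extreme rays together with correctly identifying the conclusion: distinct extreme rays of $C_W$ need not have transversal ranges — one can produce $\{M_j\}$ for which the only minimal subspaces inside $W$ are two planes sharing a line — so the content is precisely the existence of two \emph{distinct} minimal subspaces, and both that claim and the minimality claim rest on the single technical device of perturbing an extreme ray by a lower-rank element of the cone. Once that device and the elementary facts about pointed cones are in place, the remainder is bookkeeping with ranges and the orthogonality relations.
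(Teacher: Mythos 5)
Your proof is correct, but it reorganizes the argument around a different conceptual device than the paper. The paper's proof is a direct argument by contradiction: assume the minimal subspace $V_3\subset\Span_{\mathbb{C}}\{\Ran N_1,\Ran N_2\}$ is unique, take the orthogonal positive semi-definite matrix $N_3$ with $\Ran N_3\subset V_3$ furnished by Lemma~\ref{lem:orthogonal positive_matrix}, choose one of $N_1,N_2$ linearly independent from $N_3$ (say $N_1$), and form the critical pencil $\tilde\lambda N_1-N_3$, where $\tilde\lambda$ is the infimum of $\lambda$ making $\lambda N_1-N_3$ positive semi-definite. One then checks that $\Ran N_3\not\subset\Ran(\tilde\lambda N_1-N_3)$ — otherwise $\tilde\lambda$ could be decreased — and so $\Ran(\tilde\lambda N_1-N_3)$ houses a minimal subspace not containing $V_3$, contradicting uniqueness. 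You instead introduce the convex cone $C_W$ of orthogonal positive semi-definite matrices with range in $W$, invoke the Minkowski/Krein--Milman theorem for pointed closed cones to produce at least two extreme rays, and then prove two structural facts: that extreme-ray generators have minimal ranges, and that distinct extreme rays have distinct ranges. Both arguments hinge on the same elementary perturbation — if $\tilde N$ is positive semi-definite with $\Ran\tilde N\subset\Ran N$, then $N-\eps\tilde N$ remains positive semi-definite for small $\eps>0$ — but the packaging is different. The paper's route is lighter-weight (no appeal to facts about cones in finite dimensions) and constructs the second minimal subspace explicitly via the pencil; your route is more systematic and makes visible the correspondence between extreme rays of $C_W$ and minimal subspaces, which is closely related to (but not identical with) the characterization in the paper's Lemma~\ref{lem:minimal_with_N}. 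You also correctly flag that the stated conclusion $V_1\cap V_2=\emptyset$ must be read as $V_1\neq V_2$; this matches what the paper's proof actually establishes (``there must exist at least two subspaces that are minimal'').
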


\begin{proof}
Let $\tilde V_1=\Ran(N_1)$ and $\tilde V_2=\Ran(N_2)$.  By Lemma \ref{lem:orthogonal positive_matrix}, $\Span_{\mathbb{R}}\{M_j\}$ is indefinite on each of these subspaces.
Consequently, there exist subspaces $V_j'\subset \tilde V_j$ for $j\in\{1,2\}$ that are minimal with respect to $\Span_{\mathbb{R}}\{M_j\}$.
Suppose that there exists a unique minimal subspace $V_3\subset\Span_{\mathbb{C}}\{\Ran(N_1),\Ran(N_2)\}$ with respect to $\Span_{\mathbb{R}}\{M_j\}$.  By uniqueness,
$V_3=V_1'=V_2'$, which means
$V_3\subset\tilde V_1\cap\tilde V_2$.  By Lemma \ref{lem:orthogonal positive_matrix}, there must exist a nontrivial positive semi-definite hermitian matrix $N_3$ such that
$\Ran(N_3)\subset V_3$ and $\left<M_j,N_3\right>=0$ for all $j$.
Since $N_1$ and $N_2$ are linearly independent, $N_3$ must be linearly independent from either $N_1$ or $N_2$.  Without loss of generality, suppose $N_3$ is linearly independent from $N_1$.  Define
  \[
    \tilde\lambda=\inf\set{\lambda\in\mathbb{R}:\lambda N_1-N_3\text{ is positive semi-definite}}.
  \]
Since $\Ran(N_3)\subset\Ran(N_1)$, $\lambda N_1-N_3$ is positive semi-definite for all $\lambda>0$ sufficiently large, so $\tilde\lambda$ is a finite, positive number.
Since $N_1$ and $N_3$ are linearly independent, $\tilde\lambda N_1-N_3$ is nontrivial.  If $\Ran(N_3)\subset\Ran(\tilde\lambda N_1-N_3)$, then we could further decrease $\tilde\lambda$, contradicting its definition, so we must have $\Ran(N_3)\not\subset\Ran(\tilde\lambda N_1-N_3)$.  Hence, Lemma \ref{lem:orthogonal positive_matrix} tells us that $\Span_{\mathbb{R}}\{M_j\}$ is indefinite on $\Ran(\tilde\lambda N_1-N_3)$, but $V_3$ is not a subset of $\Ran(\tilde\lambda N_1-N_3)$, so there must exist a subspace of $\Ran(\tilde\lambda N_1-N_3)$ that is minimal with respect to $\Span_{\mathbb{R}}\{M_j\}$, contradicting our assumption.  Hence, there must exist at least two subspaces that are minimal with respect to $\Span_{\mathbb{R}}\{M_j\}$.
\end{proof}

We are now ready to completely characterize minimal subspaces.
\begin{lem}
\label{lem:minimal_with_N}
  Let $\{M_j\}$ be a collection of hermitian $n\times n$ matrices.  A nontrivial subspace $V\subset\mathbb{C}^n$ is minimal with respect to $\Span_{\mathbb{R}}\{M_j\}$ if and only if there exists a unique (up to a positive scalar multiple) positive semi-definite hermitian $n\times n$ matrix $N$ such that the range of $N$ equals $V$ and $\left<M_j,N\right>=0$ for all $j$.
\end{lem}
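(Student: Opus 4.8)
The plan is to prove both implications by leveraging the characterization in Lemma \ref{lem:orthogonal positive_matrix} together with the nonuniqueness dichotomy in Lemma \ref{lem:nonunique_minimal_spaces}. For the forward direction, suppose $V$ is minimal with respect to $\Span_{\mathbb{R}}\{M_j\}$. By Lemma \ref{lem:orthogonal positive_matrix} (applied with the vector space taken to be $V$ itself), there is a nontrivial positive semi-definite $N$ with $\Ran(N)\subset V$ and $\langle M_j,N\rangle=0$ for all $j$. I would first argue $\Ran(N)=V$: since $\Span_{\mathbb{R}}\{M_j\}$ is indefinite on $\Ran(N)$ again by Lemma \ref{lem:orthogonal positive_matrix}, and $\Ran(N)\subset V$ with $V$ minimal, we cannot have $\Ran(N)$ a proper nontrivial subspace of $V$, so $\Ran(N)=V$. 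For uniqueness, suppose $N_1,N_2$ are two linearly independent positive semi-definite matrices, each with range equal to $V$ and each orthogonal to every $M_j$. Then Lemma \ref{lem:nonunique_minimal_spaces} produces two \emph{disjoint} (meaning intersecting only in $\{0\}$) minimal subspaces inside $\Span_{\mathbb{C}}\{\Ran(N_1),\Ran(N_2)\}=V$; but both of these would be subspaces of $V$, and $V$ being minimal forces any minimal subspace contained in $V$ to equal $V$, so the two ``disjoint'' minimal subspaces would both equal $V$, contradicting disjointness (since $V$ is nontrivial). Hence $N$ is unique up to positive scalar.

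For the converse, suppose there is a unique (up to positive scalar) positive semi-definite $N$ with $\Ran(N)=V$ and $\langle M_j,N\rangle=0$ for all $j$. By Lemma \ref{lem:orthogonal positive_matrix}, $\Span_{\mathbb{R}}\{M_j\}$ is indefinite on $V$. It remains to show no nontrivial proper subspace $W\subsetneq V$ has $\Span_{\mathbb{R}}\{M_j\}$ indefinite on $W$. Suppose for contradiction such a $W$ exists. Passing to a minimal subspace inside $W$, we may assume $W$ is itself minimal. By the forward direction (already proved), there is a positive semi-definite $N'$ with $\Ran(N')=W$ and $\langle M_j,N'\rangle=0$ for all $j$. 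Since $\Ran(N')=W\subsetneq V=\Ran(N)$, the matrices $N$ and $N'$ are linearly independent. This contradicts the assumed uniqueness of $N$. Hence $V$ is minimal.

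The main obstacle is making the uniqueness argument in the forward direction airtight: one must be careful that Lemma \ref{lem:nonunique_minimal_spaces} really does yield two minimal subspaces both lying \emph{inside} $V$ (this uses that $\Ran(N_1)=\Ran(N_2)=V$, so $\Span_{\mathbb{C}}\{\Ran(N_1),\Ran(N_2)\}=V$), and then that a nontrivial vector space cannot contain two minimal subspaces that meet only at $0$ when it is itself minimal — because minimality of $V$ forces each such subspace to be all of $V$. A secondary point to check is that in the converse we are entitled to invoke the forward direction on the smaller minimal subspace $W$; this is legitimate since the forward direction was established first and applies to any minimal subspace of $\mathbb{C}^n$. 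Everything else is a direct bookkeeping application of the already-proven lemmas.
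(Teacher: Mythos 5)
Your forward direction is essentially identical to the paper's: use Lemma \ref{lem:orthogonal positive_matrix} to produce $N$, use minimality to force $\Ran(N)=V$, and invoke Lemma \ref{lem:nonunique_minimal_spaces} to rule out two linearly independent such $N$ (a nontrivial minimal subspace cannot properly contain another minimal subspace, so the two distinct minimal subspaces the lemma produces inside $V$ would both have to equal $V$). That part is sound.

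The converse, however, has a gap at the final step. After obtaining a nontrivial positive semi-definite $N'$ with $\Ran(N')=W\subsetneq V$ and $\left<M_j,N'\right>=0$, you conclude ``this contradicts the assumed uniqueness of $N$'' simply because $N'$ and $N$ are linearly independent. But the uniqueness hypothesis is only about positive semi-definite matrices whose range \emph{equals} $V$; since $\Ran(N')=W\neq V$, the existence of $N'$ alone does not contradict it. You need one more construction: for any $t>0$, the matrix $N+tN'$ is positive semi-definite, satisfies $\left<M_j,N+tN'\right>=0$ for all $j$, still has range exactly $V$ (it is positive definite on $V$ and vanishes on $V^\perp$), and is linearly independent from $N$ since $N'\neq 0$. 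This is the matrix that genuinely violates uniqueness, and it is exactly what the paper writes down at the corresponding point. Your detour through the already-proved forward direction to force $\Ran(N')=W$ (rather than just $\Ran(N')\subset W$) is unnecessary but harmless; the essential missing step is forming $N+tN'$.
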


\begin{proof}
 Suppose $V$ is minimal with respect to $\Span_{\mathbb{R}}\{M_j\}$.  Lemma \ref{lem:orthogonal positive_matrix} guarantees the existence of a positive semi-definite hermitian $n\times n$ matrix $N$ such that the range of $N$ is contained in $V$ and $\left<M_j,N\right>=0$ for all $j$.  If the range of $N$ is a proper subspace of $V$, then Lemma \ref{lem:orthogonal positive_matrix} also implies that $\Span_{\mathbb{R}}\{M_j\}$ is indefinite on the range of $N$, contradicting the definition of minimality.  Hence, $V$ is equal to the range of $N$.  By Lemma \ref{lem:nonunique_minimal_spaces}, $N$ must be unique up to a positive scalar multiple.

  Suppose that $V$ is not minimal with respect to $\Span_{\mathbb{R}}\{M_j\}$, but there exists a unique (up to a positive scalar multiple) positive semi-definite hermitian $n\times n$ matrix $N$ such that the range of $N$ equals $V$ and $\left<M_j,N\right>=0$ for all $j$.  By Lemma \ref{lem:orthogonal positive_matrix}, $\Span_{\mathbb{R}}\{M_j\}$ is indefinite on $V$.  Since $V$ is not minimal with respect to $\Span_{\mathbb{R}}\{M_j\}$ there exists a nontrivial proper subspace $\tilde{V}\subset V$ such that $\Span_{\mathbb{R}}\{M_j\}$ is indefinite on $\tilde{V}$.  By Lemma \ref{lem:orthogonal positive_matrix}, there exists a nontrivial positive semi-definite hermitian $n\times n$ matrix $\tilde{N}$ such that the range of $\tilde{N}$ is contained in $\tilde{V}$ and $\left<M_j,\tilde{N}\right>=0$ for all $j$.  Since the range of $\tilde{N}$ is a proper subset of the range of $N$, $\tilde{N}$ and $N$ must be linearly independent.  For any $t\geq0$, $N+t\tilde{N}$ is a matrix linearly independent from $N$ with range equal to $V$ such that $\left<M_j,N+t\tilde{N}\right>=0$ for all $j$, contradicting the uniqueness of $N$.
\end{proof}

\subsection{Examples.}

To illustrate the previous results, we consider collections of $2\times 2$ matrices that are indefinite on $\mathbb{C}^2$.  The only nontrivial proper subspaces of $\mathbb{C}^2$ are complex lines through the origin, and any nontrivial matrix on a line is either positive definite or negative definite, and a negative definite matrix is a scalar multiple of a positive definite matrix.  Hence, our collection can be indefinite on a complex line if and only if it is trivial on that complex line.

Consider $\{M_j\}=\left\{\begin{pmatrix}-1&0\\0&1\end{pmatrix},\begin{pmatrix}0&1\\1&0\end{pmatrix},\begin{pmatrix}0&i\\-i&0\end{pmatrix}\right\}$.  This collection is not indefinite on any nontrivial proper subspace of $\mathbb{C}^2$, and the only positive semi-definite matrices orthogonal to the span of these matrices are positive scalar multiples of the identity matrix.

Suppose, instead, that we consider $\left\{\begin{pmatrix}0&1\\1&0\end{pmatrix},\begin{pmatrix}0&i\\-i&0\end{pmatrix}\right\}$.  This collection is indefinite on two proper subspaces: the span of $\begin{pmatrix}1\\0\end{pmatrix}$ and the span of $\begin{pmatrix}0\\1\end{pmatrix}$.  The positive semi-definite matrices orthogonal to the span of these matrices are of the form $N_{a,b}=a\begin{pmatrix}1&0\\0&0\end{pmatrix}+b\begin{pmatrix}0&0\\0&1\end{pmatrix}$, where $a$ and $b$ are both nonnegative and at least one is positive.  Note that the range of $N_{1,0}$ equals the span of  $\begin{pmatrix}1\\0\end{pmatrix}$, and the range of $N_{0,1}$ equals the span of $\begin{pmatrix}0\\1\end{pmatrix}$.

If we further restrict to $\left\{\begin{pmatrix}0&i\\-i&0\end{pmatrix}\right\}$, then this collection is indefinite on infinitely many proper subspaces of $\mathbb{C}^2$: each is given by the span of $\begin{pmatrix}\cos\theta\\\sin\theta\end{pmatrix}$ where $\theta\in\mathbb{R}$.  The positive semi-definite matrices orthogonal to the span of these matrices are of the form $N_{a,b,c}=a\begin{pmatrix}1&0\\0&0\end{pmatrix}+b\begin{pmatrix}0&0\\0&1\end{pmatrix}+c\begin{pmatrix}0&1\\1&0\end{pmatrix}$, where at least one of $a$ and $b$ is positive and $ab\geq c^2$.  Note that the range of $N_{\cos^2\theta,\sin^2\theta,\sin\theta\cos\theta}$ is equal to the span of $\begin{pmatrix}\cos\theta\\\sin\theta\end{pmatrix}$.

Finally, consider $\left\{\begin{pmatrix}1&0\\0&0\end{pmatrix}\right\}$.  The only proper subspace on which this collection is indefinite is the span of $\begin{pmatrix}0\\1\end{pmatrix}$.  The only positive semi-definite matrices that are orthogonal to this matrix are positive scalar multiples of $\left\{\begin{pmatrix}0&0\\0&1\end{pmatrix}\right\}$.  Once again, the range of $N$ is equal to the span of $\begin{pmatrix}0\\1\end{pmatrix}$.

\subsection{Proofs of Theorems \ref{thm:isolated degeneracy} and \ref{thm:isolated degeneracy_improved}}
\label{sec:more_proof}

\begin{proof}[Proof of Theorem \ref{thm:isolated degeneracy}]
Let $n_p^-$ and $\{L_1^p,\ldots,L_{n-1}^p\}$ be as in the statement of Theorem \ref{thm:main_theorem}.  From Corollary \ref{cor:counting_negative_eigenvalues}, we know that $\Span_{X\in T_p(\partial\Omega)}\tau^3_p(X,\cdot,\cdot)$
is indefinite on $K^{1,0}_p$ at every point $p\in\bd\Om$ at which $\opL$ has exactly $n-q-1$ positive eigenvalues.  At such a $p$, it follows from
Lemma \ref{lem:orthogonal positive_matrix} that there exists a $(q-n_p^-)\times(q-n_p^-)$ positive semi-definite matrix $N_p$ satisfying \eqref{eq:tau_3_hypothesis}.

Suppose \eqref{eq:kernel_hypothesis} is satisfied for some $X\in T_p(\partial\Omega)$.  If we identify the range of $N_p$ with the span of the vector fields in \eqref{eq:kernel_hypothesis}, then $K^{1,0}_{p,X}$ is in the range of $N_p$, so
$\Span_{Y\in T_p(\partial\Omega)}\tau^3_p(Y,\cdot,\cdot)$ must be indefinite on $K^{1,0}_{p,X}$ by \eqref{eq:tau_3_hypothesis} and Lemma \ref{lem:orthogonal positive_matrix}.
By hypothesis, this means that there exists $Y\in T_p(\partial\Omega)$ such that
$\tau^3_p(Y,\cdot,\cdot)+\tau^4_p(X,X,\cdot,\cdot)$ is positive definite on $K^{1,0}_{p,X}$.  Hence, using Remark \ref{rem:kernel_hypothesis}, we have
\[
  0<\sum_{j,k=n_p^-+1}^{q}N_p^{\bar k j}(\tau^3_p(Y,L_j^p,\bar L_k^p)+\tau^4_p(X,X,L_j^p,\bar L_k^p)),
\]
so \eqref{eq:tau_3_hypothesis} implies that \eqref{eq:tau_4_hypothesis} is satisfied.
\end{proof}

\begin{proof}[Proof of Theorem \ref{thm:isolated degeneracy_improved}]

  We follow the proof of Theorem \ref{thm:isolated degeneracy}, with the exception that Lemma \ref{lem:minimal_with_N} and our hypotheses guarantees that we may choose $N_p$ so that $\Ran(N_p)=V_p$ (under the usual identification suggested by \eqref{eq:kernel_hypothesis}).  For $X\in T_p(\partial\Omega)$ satisfying \eqref{eq:kernel_hypothesis}, since $\Ran(N_p)\subset K^{1,0}_{p,X}$ and $V_p=\Ran(N_p)$, our hypotheses guarantees that there exists $Y\in T_p(\partial\Omega)$ such that $\tau^3_p(Y,\cdot,\cdot)+\tau^4_p(X,X,\cdot,\cdot)$ is positive definite on $V_p$, and \eqref{eq:tau_3_hypothesis} follows by the same reasoning as before.  The remainder of the proof is identical.
\end{proof}

%
%
\section{Examples}\label{sec:example}

We will provide several examples of weak $Z(2)$ domains in $\mathbb{C}^3$.  Our examples will all contain isolated degeneracies at the origin. Since the degeneracies are isolated, these local examples can easily be realized as bounded $Z(2)$ domains in a simple special case of Proposition 6.6 in \cite{HaRa15}.  Let $z_j=x_j+iy_j$.  In a neighborhood of the origin, we will write
\[
  \rho(z)=-y_3+P(z_1,z_2,x_3),
\]
where $P$ is a real quartic polynomial such that $P(0)=0$, $\nabla P(0)=0$, and $\Hess(X,X)P(0)=0$ for all $X\in T_0(\mathbb{C}^n)$.
Using \eqref{eq:boundary_hessian}, $\Hess^b(X,X)=\Hess(X,X)+O(|z|)$, so we will be able to neglect the difference between these operators.
Since the Levi form has no non-trivial eigenvalues at the origin, $T^{1,0}_{0}(\partial\Omega)=\tilde K^{1,0}_{0}(\partial\Omega)$,
so we can neglect this space.
We can choose our orthonormal coordinates to satisfy $L_1=\frac{\partial}{\partial z_1}+O(|z|^2)$ and $L_2=\frac{\partial}{\partial z_2}+O(|z|^2)$.
In these coordinates, we can easily compute $\tau^3_0(X,L_j,\bar L_k)=X\frac{\partial^2 P}{\partial z_j\partial\bar z_k}\big|_0$ and,
when it is defined, $\tau^4_0(X,X,L_j,\bar L_k)=\Hess(X,X)\frac{\partial^2 P}{\partial z_j\partial\bar z_k}\big|_0$.

\begin{ex}\label{ex:simple example}
For our first example, let
\[
  P(z)=\re(z_1^2\bar z_2)-x_3|z_1|^2+x_3|z_2|^2-|z_1|^2|z_2|^2+|z_2|^4,
\]
so the matrix representing the Levi form (still denoted $\opL$ by an abuse of notation) is
\[
 \opL=\begin{pmatrix}-x_3-|z_2|^2&z_1-\bar z_1 z_2\\\bar z_1-z_1\bar z_2&x_3-|z_1|^2+4|z_2|^2\end{pmatrix}+O(|z|^3)
\]
Then the nontrivial values of $\tau^3_p$ can be computed from
\begin{align*}
  \tau^3_0\left(\frac{\partial}{\partial x_1},\cdot,\cdot\right)&=\begin{pmatrix}0&1\\1&0\end{pmatrix},\\
  \tau^3_0\left(\frac{\partial}{\partial y_1},\cdot,\cdot\right)&=\begin{pmatrix}0&i\\-i&0\end{pmatrix},\\
  \tau^3_0\left(\frac{\partial}{\partial x_3},\cdot,\cdot\right)&=\begin{pmatrix}-1&0\\0&1\end{pmatrix}.
\end{align*}
We note that matrices of the form $\begin{pmatrix}-s&w\\\bar w&s\end{pmatrix}$ where $w\in\mathbb{C}$ and $s\in\mathbb{R}$ are nondegenerate unless $w=0$ and $s=0$, so $K^{1,0}_{p,X}$ is trivial unless $X|_0=a\frac{\partial}{\partial x_2}+b\frac{\partial}{\partial y_2}$, in which case $K^{1,0}_{p,X}=\Span\{L_1,L_2\}$.  For such $X$, we have
\[
  \tau^4_0(X,X,\cdot,\cdot)=\begin{pmatrix}-2(a^2+b^2)&0\\0&8(a^2+b^2)\end{pmatrix}.
\]
This is never positive definite, but
\[
  \tau^4_0(X,X,\cdot,\cdot)-4(a^2+b^2)\tau^3_0\left(\frac{\partial}{\partial x_3},\cdot,\cdot\right)=\begin{pmatrix}2(a^2+b^2)&0\\0&4(a^2+b^2)\end{pmatrix},
\]
which is positive definite when $x\neq 0$, so by Theorem \ref{thm:isolated degeneracy} we must have weak $Z(2)$ in a neighborhood of the origin.
Not coincidentally, for any $w\in\mathbb{C}$, $\opL$ is positive definite for $t$ sufficiently small along the path $\gamma_t=(0,tw,-2t^2|w|^2+iP(0,tw,-2t^2|w|^2))$.  Following the proof of Theorem \ref{thm:isolated degeneracy}, we can see that if $0<\eps<1$
\[
  \Upsilon=\begin{pmatrix}1-\eps(1-t x_3)&-\eps t z_1\\-\eps t\bar z_1&1-\eps(1+t x_3)\end{pmatrix}
\]
is positive definite on a neighborhood of the origin, $I-\Upsilon$ is also positive definite on a neighborhood of the origin, $\Tr\Upsilon<2$, and $\Upsilon$ satisfies
\[
  \Tr(\opL)-\Tr(\Upsilon\opL)=\eps((2t-1) |z_1|^2+3|z_2|^2+2t x_3^2)+O(|z|^3),
\]
so this quantity will be positive in a neighborhood of the origin provided that $t>\frac{1}{2}$.
\end{ex}

\begin{ex}\label{ex:complicated example}
For our second example, let
\[
  P(z)=\re(z_1^2\bar z_2)-|z_1|^2|z_2|^2+\frac{1}{4}|z_2|^4-|z_1|^2 x_3^2+|z_2|^2 x_3^2,
\]
so
\[
\opL=\begin{pmatrix}-|z_2|^2-x_3^2&z_1-\bar z_1 z_2\\\bar z_1-z_1\bar z_2&-|z_1|^2+|z_2|^2+x_3^2\end{pmatrix}+O(|z|^3).
\]
This time, the only nontrivial value of $\tau^3_p$ are
\begin{align*}
  \tau^3_0\left(\frac{\partial}{\partial x_1},\cdot,\cdot\right)&=\begin{pmatrix}0&1\\1&0\end{pmatrix},\\
  \tau^3_0\left(\frac{\partial}{\partial y_1},\cdot,\cdot\right)&=\begin{pmatrix}0&i\\-i&0\end{pmatrix},\\
\end{align*}
so $K^{1,0}_{p,X}$ is trivial unless $X|_0=a\frac{\partial}{\partial x_2}+b\frac{\partial}{\partial y_2}+c\frac{\partial}{\partial x_3}$, in which case $K^{1,0}_{p,X}=\Span\{L_1,L_2\}$.  For such $X$, we have
\[
  \tau^4_0(X,X,\cdot,\cdot)=\begin{pmatrix}-2(a^2+b^2+c^2)&0\\0&2(a^2+b^2+c^2)\end{pmatrix}.
\]
This is never positive definite, and adding a component of $\tau^3_0$ will only decrease the size of the determinant.  However, $\Span_{Y\in T_0(\partial\Omega)}\tau^3_0(Y,\cdot,\cdot)$ is indefinite on $V_0=\Span\{L_2\}$, so $V_0$ is a minimal subspace with respect to $\Span_{Y\in T_0(\partial\Omega)}\tau^3_0(Y,\cdot,\cdot)$.  Whenever $V_0\subset K^{1,0}_{p,X}$, the restriction of $\tau^4_0(X,X,\cdot,\cdot)$ to $V_0$ is simply $\left(2(a^2+b^2+c^2)\right)$, which is positive definite.  Hence, even though Theorem \ref{thm:isolated degeneracy} fails,
Theorem \ref{thm:isolated degeneracy_improved} will still give us weak $Z(2)$ in a neighborhood of the origin.  In fact, we can see that if $0<\eps<1$
\[
  \Upsilon=\begin{pmatrix}1-2\eps t^2|z_1|^2&-\eps t z_1\\-\eps t\bar z_1&1-\eps\end{pmatrix}
\]
is positive definite on a neighborhood of the origin, $I-\Upsilon$ is also positive definite on a neighborhood of the origin, $\Tr\Upsilon<2$, and $\Upsilon$ satisfies
\[
  \Tr(\opL)-\Tr(\Upsilon\opL)=\eps((2t-1)|z_1|^2+|z_2|^2+x_3^2)+O(|z|^3),
\]
so this quantity will be positive in a neighborhood of the origin provided that $t>\frac{1}{2}$.
\end{ex}

\bibliographystyle{alpha}
\bibliography{mybib_5_10_18}
\end{document}